\numberwithin{equation}{section}
\theoremstyle{theorem}
\newtheorem {theo}{Theorem}[section]
\newtheorem*{theo*}{Main Theorem}
\newtheorem*{thm*}{Theorem}
\newtheorem {lem}[theo]{Lemma}
\newtheorem*{lem*}{Lemma}
\newtheorem {prop}[theo]{Proposition}
\newtheorem*{prop*}{Proposition}
\newtheorem*{cor*}{Corollary}
\newtheorem*{conjecture*}{Conjecture}
\theoremstyle{definition}
\newtheorem {defi}[theo]{Definition}
\newtheorem*{defi*}{Definition}
\newtheorem {nota}[theo]{Notation}
\newtheorem*{nota*}{Notation}
\theoremstyle{remark}
\newtheorem {rem}[theo]{Remark}
\newtheorem*{rem*}{Remark}
\newtheorem {rems}[theo]{Remarks}
\newtheorem*{warning*}{Warning}
\newtheorem*{warnings*}{Warnings}
\newtheorem*{convention*}{Convention}
\newtheorem {exemple}[theo]{Example}
\newtheorem*{exemple*}{Example}
\newtheorem*{exemples*}{Examples}
\newtheorem*{question*}{Question}
\newtheorem*{questions*}{Questions}
\newtheorem*{fact*}{Fact}
\newtheorem*{acknowledgments*}{Acknowledgments}
\def\1{\mathbf{1}}
\def\N{{\mathds N}}
\def\Z{{\mathds Z}}
\def\2Z{{\fract{\Z}{2\Z}}}
\def\ov{\overline}
\def\e{\varepsilon}
\newcommand{\fract}[2]{\hbox{\leavevmode 
\kern.1em \raise .25ex \hbox{\the\scriptfont0 $#1$}\kern-.1em }\big/
  {\hbox{\kern-.15em \lower .5ex \hbox{\the\scriptfont0 $#2$}} }}
\newcommand{\subfract}[2]{\hbox{\leavevmode
  \kern0em \raise .25ex \hbox{\the\scriptfont0 \tiny $#1$}\kern-.1em }/
  {\hbox{\kern-.15em \lower .5ex \hbox{\the\scriptfont0 \tiny $#2$}} }}
\newcommand{\dessin}[2]{
  \vcenter{\hbox{\includegraphics[height=#1]{#2.pdf}}}}
\renewcommand{\quote}[1]{`#1'}
\def\nR{\textnormal R}
\def\SL{\textnormal SL}
\def\wSL{w\SL}
\DeclareMathOperator{\Aut}{Aut}
\definecolor{pink}{rgb}{0.858, 0.188, 0.478}
\definecolor{orange}{rgb}{1, 0.647, 0}
\begin{document} 

\title{$k$-reduced groups and Milnor invariants} 
\author[B. Audoux]{Benjamin Audoux}
         \address{Aix Marseille Univ, CNRS, Centrale Marseille, I2M, Marseille, France}
         \email{benjamin.audoux@univ-amu.fr}
\author[J.B. Meilhan]{Jean-Baptiste Meilhan} 
\address{Univ. Grenoble Alpes, CNRS, Institut Fourier, F-38000 Grenoble, France}
	 \email{jean-baptiste.meilhan@univ-grenoble-alpes.fr}
\author[A. Yasuhara]{Akira Yasuhara} 
\address{Faculty of Commerce, Waseda University, 1-6-1 Nishi-Waseda,
  Shinjuku-ku, Tokyo 169-8050, Japan}
	 \email{yasuhara@waseda.jp}
%
%
\begin{abstract} 
We characterize, in an algebraic and in a diagrammatic way,
 Milnor string link invariants indexed by sequences
where any index appears at most $k$ times, for any fixed $k\ge 1$. The algebraic characterization is given in terms of an Artin-like action on the so-called $k$--reduced free groups; the diagrammatic characterization uses the langage of welded knot theory. The link case is also addressed. 
\end{abstract} 

\maketitle

\section*{Introduction}

In his seminal works \cite{Milnor,Milnor2}, J.~Milnor introduced a family of concordance invariants for  $n$-component links,  which can be seen as a wide generalization of the linking number. 
Indexed by sequences $I$ of possibly repeating indices in $\{1,\ldots,n\}$, these \emph{Milnor
  invariants} $\mu(I)$ are
integers extracted from longitudes within the fundamental group of the
link complement, well-defined only modulo a subtle indeterminacy. The
geometric meaning of this indeterminacy was clarified by the work of
N.~Habegger and X.S.~Lin \cite{HL}, who showed that Milnor invariants
are actually well-defined integer-valued invariants of \emph{string links}, which are pure tangles without closed components. 

Recall that two (string) links are \emph{link-homotopic} if they are related by a sequence of isotopies and crossing changes involving two strands of a same component. 
Milnor proved in \cite{Milnor2} that, if a sequence $I$ is without repetition, then Milnor invariant $\ov{\mu}(I)$ is in fact a link-homotopy invariant. He further showed how these non-repeated invariants can be extracted from the \emph{reduced} fundamental group of the link complement, which is is the  \quote{maximal}  quotient where each meridian commutes with any of its conjugates. 
Using an Artin-like action of string links on the reduced free group, Habegger and Lin actually showed that two string links have same Milnor invariants indexed by sequences without repetition, if and only if they are link-homotopic \cite{HL}.

The purpose of this paper is to give a higher-order version of this classification result of Habegger and Lin.   Namely, we  characterize the information contained by Milnor invariants $\mu(I)$ of string links with $r(I)\le k$, where $r(I)$ denotes the maximum number of time that any index appears in $I$.  (In particular, Milnor invariants $\mu(I)$  with $r(I)=1$ are precisely Milnor link-homotopy invariants.)  
\\
Our main result is the following; explanations of terminologies and notation shall follow. 
\begin{theo*}
 Let $L$ and $L'$ be two $n$-component string links. The following are equivalent. 
 \begin{enumerate}
  \item $\mu_L(I)=\mu_{L'}(I)$ for any sequence $I$ with $r(I)\le k$. 
  \item $L$ and $L'$ induce the  same $k$--reduced free action: $\varphi_L^{(k)}=\varphi_{L'}^{(k)}\in \Aut_c(R_{k}F_n)$. 
  \item $L$ and $L'$ are self $w_{k}$-concordant. 
\end{enumerate}
\end{theo*}

Let us first explain assertion (2). 
Let $G(L)$ be the fundamental group of the complement of $L$. This
group is normally generated by $n$ meridians,  
and for each $i$, we denote by $N_i$ the normal subgroup of $G(L)$ generated
by the $i$th meridian. 
The \emph{$k$--reduced quotient} of $G(L)$ is defined by 
\[
  \nR_kG(L):= \fract{G(L)}{\Gamma_{k+1} N_1\cdots \Gamma_{k+1} N_n},
  \]
where $\Gamma_q N$ denotes the $q$th term of the lower central series of a group $N$.
This generalizes Milnor's above-mentioned notion of reduced group
\cite{Milnor}, which coincides with $R_1G(L)$. 
In particular, if $F_n$ is the free group on $n$ generators, then $\nR_kF_n$ is called the \emph{$k$--reduced free group}.
Now, given an $n$-component string link $L$, we show in Section \ref{sec:kred} that, for each $k\ge 2$, there is an associated \emph{$k$--reduced free action} 
\[
  \varphi^{(k)}_L\in \Aut_c(R_{k}F_n),
  \]
where $\Aut_c(R_{k}F_n)$ denotes the set of automorphisms of $R_{k}F_n$ that act by conjugation on each generator. 
This can be seen as a generalization of Habegger-Lin's representation for the group of link-homotopy classes of string links \cite[Thm.~1.7]{HL} in the case $k=1$. 
We stress, however, that our proof is very different in nature from \cite{HL}.

Let us now clarify assertion (3). 
We use the langage of \emph{welded knot theory}, which is a diagrammatic generalization of knot theory. 
We stress, firstly, that the set of classical (string) links injects into the larger set $\wSL(n)$ of welded (string) links and, secondly, that Milnor invariants extend naturally to welded objects, so that they coincide with the classical invariants on classical objects. \\
In \cite{arrow} a diagrammatic calculus for welded knotted objects was developped, called \emph{arrow calculus}, which can be seen as a generalization of the theory of Gauss diagrams.  
A $w$-tree for a welded diagram $D$, is an oriented, unitrivalent tree, whose univalent vertices lie disjointly on $D$. 
Given such a $w$-tree $T$ for $D$, there is a \quote{surgery} 
procedure that yields a new welded diagram $D_T$, which is roughly obtained by inserting an \quote{iterated commutator of crossings} 
in a neighborhood of the head of $T$. 
Define the degree of a $w$-tree to be half the number of its vertices. 
The \emph{self $w_k$-equivalence} is the equivalence relation on welded string links generated by surgeries on degree $\ge k$ $w$-trees whose univalent vertices all lie on the \emph{same} string link component. For $k=1$, this notion turns out to coincide with the usual link-homotopy relation for string links \cite{ABMW}. 
On the other hand, there is a combinatorial equivalence relation of
\emph{welded concordance} for welded (string) links, which is
generated by birth, death and saddle moves \cite{BC,Gaudreau} and
which naturally encompasses the topological concordance for classical (string) links. 
The \emph{self $w_k$-concordance} is the equivalence relation obtained by combining the above two relations, and our main result states that this characterizes combinatorially the information contained in Milnor invariants $\mu(I)$ with $r(I)\le k$. 
 \medskip 

In fact, the $k$-reduced free action involved in assertion (2) is more generally defined for $wSL(n)$, and our main theorem follows from a more general characterization for welded string links, see Theorems \ref{thm:12} and \ref{th:23}. 
 We also show that the map sending a welded string link $L$ to its associated $k$-reduced free action $\varphi^{(k)}$, descends to a group isomorphism 
 \[
   \fract{\wSL(n)}{\textrm{self
       $w_k$-concordance}}\stackrel{\simeq}{\longrightarrow} \Aut_c(R_{k}F_n),
   \]
 for any $k\ge 1$; see Proposition \ref{prop:iso}. The case $k=1$ was proved in \cite{ABMW}, and is a generalization of \cite[Thm.~1.7]{HL}. 
This isomorphism suggests that welded theory provides a sensible diagrammatic counterpart of the algebraic constructions underlying Milnor invariants. 
Our main theorem also refines a recent result of B.~Colombari \cite{Colombari}, who gave a diagrammatic characterization of string links having same Milnor invariants of length $\le q$. 
 We note that the geometric properties of Milnor link invariants $\ov{\mu}(I)$ with $r(I)\le k$ was previously investigated in \cite{FY, yasuharaTAMS,yasuharaAGT}, using clasper theory. 
\medskip 

We address the (welded) link case in the final section of this paper. 
As developped there, building on the proof  of our main theorem for string links, and using straightforward adaptations of the above mentioned work of Colombari \cite{Colombari}, we obtain in particular the following for classical links.

\begin{thm*}
 A link has vanishing Milnor invariants $\ov{\mu}(I)$ with $r(I)\le k$, if and only if it is self $w_k$-equivalent to the trivial link. 
\end{thm*}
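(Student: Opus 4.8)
The plan is to deduce the statement from our Main Theorem for string links via the closure operation; the key point is that, when the comparison is with the trivial link, Milnor's indeterminacy for closed links evaporates, reducing everything to the string link case. I work in the welded category, into which classical links inject. Write $L$ as the closure $\widehat{\sigma}$ of an $n$-component welded string link $\sigma$, which is always possible, and recall (Habegger--Lin \cite{HL} and Milnor's original analysis) that $\ov{\mu}_L(I)$ is the integer $\mu_\sigma(I)$ reduced modulo the indeterminacy $\Delta_L(I)$ generated by the invariants $\ov{\mu}_L(J)$ for the sequences $J$ obtained from $I$ by deleting one index and permuting cyclically. The easy direction is immediate: the surgeries generating self $w_k$-equivalence have degree $\ge k$ with all univalent vertices on a single component, so by the local computation underlying the Main Theorem they preserve every $\mu(I)$ with $r(I)\le k$, and the same holds on closed diagrams; hence $\ov{\mu}_L(I)$ agrees with its value on the trivial link, namely $0$.

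For the converse, assume $\ov{\mu}_L(I)=0$ for every $I$ with $r(I)\le k$. Since each generating sequence $J$ of $\Delta_L(I)$ is strictly shorter than $I$ and satisfies $r(J)\le r(I)$, an induction on length shows that all these indeterminacies vanish: linking numbers are genuine integers, and once $\ov{\mu}_L(J)=0$ for every shorter $J$ with $r(J)\le k$ one gets $\Delta_L(I)=0$, so that $\mu_\sigma(I)=\ov{\mu}_L(I)=0$ is a well-defined vanishing integer, independent of the representative $\sigma$. Thus $\mu_\sigma(I)=0$ for all $I$ with $r(I)\le k$, and the implication $(1)\Rightarrow(3)$ of the Main Theorem gives that $\sigma$ is self $w_k$-concordant to the trivial string link; taking closures, $L$ is self $w_k$-concordant to the trivial link.

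It remains to promote this concordance to a self $w_k$-equivalence, which I expect to be the main obstacle. Here I would use a straightforward adaptation of Colombari's diagrammatic result \cite{Colombari}, which controls Milnor invariants through $w$-surgery moves rather than through concordance. Transplanted from the length filtration to the $r(I)\le k$ filtration and to closed diagrams, it should show that, once all invariants with $r(I)\le k$ vanish, the birth, death and saddle moves realizing the concordance can be absorbed into self $w_k$-equivalence moves, using the room available along a closed component; this would yield the desired self $w_k$-equivalence of $L$ with the trivial link. The difficulty is to check that Colombari's arrow-calculus arguments survive the same-component restriction defining the self $w_k$ relation, and that the concordance moves can be traded degree-compatibly and uniformly over all $I$ with $r(I)\le k$.
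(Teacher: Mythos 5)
Your reduction to the string link case is fine as far as it goes: the induction on length showing that the indeterminacies $\Delta_L(J)$ all vanish, so that $\mu_\sigma(I)=\ov{\mu}_L(I)=0$ for any string link $\sigma$ closing to $L$, is correct, and combined with the Main Theorem it does give that $L$ is self $w_k$-\emph{concordant} to the unlink. (The paper reaches this intermediate conclusion by a different route, namely as a special case of Theorem \ref{thm:links} on $k$-reduced peripheral systems, without passing through closures and indeterminacies.) But the entire content of the statement beyond that point is the upgrade from self $w_k$-concordance to self $w_k$-\emph{equivalence}, and this is precisely the step you leave unproved, flagging it yourself as the main obstacle. Asserting that the birth/death/saddle moves ``can be absorbed into self $w_k$-equivalence moves'' is not a proof, and it is also not how the argument actually works: there is no a posteriori mechanism for trading concordance moves for self $w_k$-moves, and in general self $w_k$-concordance is a strictly coarser relation than self $w_k$-equivalence.

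What the paper does instead is avoid the concordance at the source rather than absorb it afterwards. One first puts $L$ in sorted form up to self $w_k$-equivalence alone (Proposition \ref{prop:4.4}, which rests on Theorem \ref{th:nonobvious} and Remark \ref{rem:repeat}), then reruns the comparison-of-longitudes argument on sorted link presentations. In that argument the welded concordance is needed for exactly one operation: inserting or deleting the relators $[x_j,\lambda^k_j]$ arising from the closed-component relations in the presentation of $\nR_kG(L)$ given in Proposition \ref{prop:CMpresentation}. When all $\ov{\mu}_L(I)$ with $r(I)\le k$ vanish one has $\nR_kG(L)\cong \nR_kF_n$ with trivial longitudes, so this operation never occurs and every remaining move is a self $w_k$-equivalence. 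To complete your proof you would need to carry out this analysis (or an equivalent one); as written, the key implication is only conjectured.
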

This follows from a more general result (Theorem \ref{thm:links}) which characterizes the so-called $k$--reduced peripheral system of \emph{welded} links, that is the $k$-reduced link group endowed with peripheral elements, see Section \ref{sec:links}.

\begin{acknowledgments*}
The authors thank Jacques Darn\'e for bringing to their knowledge the
result  \cite[Lem.~2.37]{darne}. 
The first author is partially supported by the project SyTriQ (ANR-20-CE40-0004) of the ANR, and thanks the IRL PIMS-Europe for its hospitality during the period in which part of the work on this paper was done. 
The second author is partially supported by the project AlMaRe (ANR-19-CE40-0001-01) of the ANR. 
The third author is supported by the
JSPS KAKENHI grant 21K03237, and by the Waseda University grant
for Special Research Projects 2021C-120. 
\end{acknowledgments*}

\section{Basic algebraic preliminaries}\label{sec:1}

This section reviews algebraic tools that will be used in this paper, together with basic and well-known results. 
Several notation used throughout the paper will also be set in this section.

Let $n$ be a positive integer. We denote by $F_n$ the free group on $n$ generators $\alpha_1,\cdots,\alpha_n$. 
For each $i\in \{1,\cdots,n\}$, denote by $N_i$ the normal subgroup of $F_n$ generated by $\alpha_i$.

\subsection{Commutators and the lower central series}\label{sec:LCS}

We use the following convention for commutators and conjugates ($x,y\in F_n$): 
\[
  [x,y]:=x\overline{y}\,\overline{x}y\quad \textrm{ and }\quad  x^y:=
  \overline{y}xy,
  \]
where we write $\overline{a}$ for the inverse of an element $a$.
This somewhat nonstandard convention for commutators will be justified by the diagrammatic counterpart of the theory, reviewed in Section \ref{sec:arrow}. 
We note that with our convention, for elements $a,b,c$ of $F_n$, we
have the following basic properties: \\[-1.1cm]
\begin{multicols}{3}

\begin{equation}\tag{C$0$}\label{eq:cominv}
  \ov{[a,b]}=[\ov{b},\ov{a}];
\end{equation}

\begin{equation}\tag{C$1$}\label{eq:com0}
  [a,b]=\ov{b}^{\ov{a}}b=a\ov{a}^{b};
\end{equation}

\begin{equation}\tag{C$2$}\label{eq:com1}
  [a,b]^c=[a^c,b^c]; 
\end{equation}

\end{multicols}
Moreover, recall the following commutator identies (compare with \cite[Thm.~5.1]{MKS}).  
\begin{equation}\tag{C$3$}\label{eq:com2}
 [a,bc] = [a,c][a,b]^c \,\,\, \textrm{ and } \,\,\, [ab,c]=[b,c]^{\ov{a}}[a,c]; 
\end{equation}
\begin{equation}\tag{C$4$}\label{eq:com3}
 \big[[a,b],c\big] = 
 \big[\ov{a},[\ov{c},\ov{b}]\big]^{b\ov{a}}  \big[\ov{b},[\ov{a},\ov{c}]\big]^{c\ov{a}}.
\end{equation}

The lower central series of $F_n$ is the family of nested subgroups $\{\Gamma_k F_n\}_k$ defined inductively by $\Gamma_1 F_n=F_n$ and $\Gamma_{k+1} F_n = [F_n,\Gamma_k F_n]$.  
The following, less standard, notion will also be useful. 
\begin{defi}\label{def:linear}
A length $k$ \emph{linear commutator} is a an element of $\Gamma_k F_n$ of the form 
\[\left[x_1,\left[x_2,\left[x_3,\cdots
        [x_{k-1},x_k]\cdots\rule{0mm}{.3cm}\right]\rule{0mm}{.4cm}\right]\rule{0mm}{.5cm}\right]
\]
for some elements $x_i\in F_n$. 
\end{defi}

We shall need the following basic results. 
\begin{lem}\label{lem:2.2}
  Let $C\in F_n$ be a length $l$ commutator, with $k$ entries in $N_i$ for some $i$ ($k\leq l$). Then $C$ is a product of length $\geq l$ commutators where each entry is an element of 
$\{\alpha_j,\ov{\alpha}_j\}_j$, and with at least $k$ entries that are either $\alpha_i$ or its inverse. 
\end{lem}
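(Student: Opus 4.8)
The plan is to proceed by induction, reducing each entry of $C$ to a single generator while simultaneously keeping track of the commutator length and of the \emph{markers} $\alpha_i^{\pm1}$ arising from the $N_i$-entries.

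First I would settle the base case: if every entry of $C$ is a single generator $\alpha_j^{\pm1}$, then such an entry lies in $N_i$ exactly when $j=i$. Indeed $N_i$ is the kernel of the retraction $F_n\to F_{n-1}$ killing $\alpha_i$, and this retraction sends $\alpha_j^{\pm1}$ to $1$ if and only if $j=i$; hence the $k$ entries lying in $N_i$ are literally $\alpha_i$ or $\ov{\alpha_i}$, and $C$ is already the desired product (a single factor).

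For the inductive step, suppose some entry is not a single generator. The two basic moves are: splitting a product-entry by \eqref{eq:com2}, which replaces a commutator by a product of two commutators of the \emph{same} length (the number of leaves is unchanged) carrying formal conjugations on the whole bracket; and straightening a conjugate via the consequence $g\alpha_i^{\eta}\ov g=\alpha_i^{\eta}\,[\ov{\alpha_i^{\eta}},\ov g]$ of \eqref{eq:com0}. The crucial point is to treat the $N_i$-entries with care. Writing such an entry as a product of conjugates $\prod_s g_s\alpha_i^{\eta_s}\ov{g_s}$ of $\alpha_i^{\pm1}$, I would first separate these conjugates into distinct brackets by \eqref{eq:com2}, and then, in each bracket, straighten the single remaining conjugate $g\alpha_i^{\eta}\ov g=\alpha_i^{\eta}\,[\ov{\alpha_i^{\eta}},\ov g]$ and split once more by \eqref{eq:com2}. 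Both resulting commutators then carry a \emph{literal} $\alpha_i^{\pm1}$ in that slot (one as $\alpha_i^{\eta}$, one as the entry $\ov{\alpha_i^{\eta}}$ of the longer inner bracket $[\ov{\alpha_i^{\eta}},\ov g]$), while the leftover word $\ov g$ is strictly shorter than the conjugate we started from. Since a conjugate of an element of $N_i$ is again in $N_i$, and since we never split an $\alpha_i$-core away, every commutator produced retains one literal $\alpha_i^{\pm1}$ per original $N_i$-entry; hence each has at least $k$ such markers. No move ever decreases the length, so all factors have length $\ge l$.

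The remaining task, which I expect to be the main obstacle, is to dispose of the formal conjugations and to guarantee termination. A conjugated commutator is removed at the very end through the identity $D^w=D\,[\ov D,w]$, again a consequence of \eqref{eq:com0}; here $\ov D=[\ov B,\ov A]$ is still a commutator by \eqref{eq:cominv}, so its entries are the original generators inverted and the $\alpha_i$-markers stay literal. This trades a conjugated commutator for the unconjugated one times a strictly longer commutator, preserving both the length bound and the count of $\ge k$ literal markers while lowering the complexity of the surviving conjugators. The delicacy is that naive bookkeeping does not obviously terminate: \eqref{eq:com2} duplicates the untouched sub-bracket and conjugations reintroduce letters. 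I would control this with a well-founded measure, for instance the pair (maximal word-length among all entries, multiset of word-lengths of the conjugators) ordered lexicographically, and check that each move strictly decreases it, keeping every conjugation \emph{formal}, attached to a whole bracket and never distributed into a marker entry, until the final absorption via $D^w=D\,[\ov D,w]$. The other point requiring genuine care, as stressed above, is marker preservation: one must straighten each $\alpha_i$-core with a commutator identity rather than splitting it off, so that literally $k$ copies of $\alpha_i^{\pm1}$ survive in every factor of the final product.
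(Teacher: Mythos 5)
Your proof is correct and is essentially the paper's: the identities \eqref{eq:combonus} that the paper derives and applies are exactly your combination of the splitting move \eqref{eq:com2} with the conjugation-absorbing identity $D^w=D\,[\ov D,w]$ (a consequence of \eqref{eq:com0} and \eqref{eq:cominv}), and the paper likewise first straightens each conjugate of $\alpha_i^{\pm1}$ via $a^b=a[\ov a,b]$ so that literal markers survive, before reducing all remaining entries to generators. The termination bookkeeping you single out as the main obstacle is handled just as informally in the paper (``apply \eqref{eq:combonus} iteratively until\ldots''), so no genuine gap separates the two arguments.
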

\begin{proof}
Combining  (\ref{eq:com2}) with (\ref{eq:cominv}) and (\ref{eq:com0}), gives the identities 
\begin{equation}\tag{C$5$}\label{eq:combonus}
 [a,bc] = [a,c][a,b]\big[[\ov{b},\ov{a}],c\big] \,\,\, \textrm{ and } \,\,\, [ab,c]=\big[a,[\ov{c},\ov{b}]\big][b,c][a,c]. 
\end{equation}
By assumption, $k$ entries of $C$ are products of conjugates of $\alpha_i$ or its inverse. 
Using (\ref{eq:combonus}) on these $k$ entries, $C$ can be written as
a product of length $\ge l$ commutators, each having $\ge k$ entries that are a single conjugate of $\alpha_i$ or its inverse. 
Since $a^b=a[\overline{a},b]$ by (\ref{eq:com0}), we have by using
(\ref{eq:combonus}) that $C$ is written as a product of length $\geq l$ commutators, each having at least $k$ entries that are either $\alpha_i$ or its inverse. 
Now consider one such length $\geq l$ commutator $C'$. One can apply (\ref{eq:combonus}) iteratively on 
all entries of $C'$, until it is written as a product of iterated
commutators with entries in $\big\{\alpha_j;\ov{\alpha}_j\big\}_j$ and clearly, each of these commutators necessarily contains at least $k$ entries that are either $\alpha_i$ or its inverse, since $C'$ does.
\end{proof}
\begin{lem}\label{lem:2.1}
Let $C\in F_n$ be a length $l$ commutator, with $k$ entries in $N_i$ 
for some $i$ ($k\leq l$). 
Then $C\in \Gamma_kN_i$. More precisely, $C$ is a product of length $k$ linear commutators whose entries are all conjugates of $\alpha_i$  or its inverse. 
\end{lem}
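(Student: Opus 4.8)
The plan is to establish both assertions simultaneously, by induction on the length $l$ of the commutator $C$. Write $C=[A,B]$, where $A$ and $B$ are commutators of lengths $l_1,l_2$ with $l_1+l_2=l$; the entries of $C$ are distributed among $A$ and $B$, so that if $A$ and $B$ have respectively $k_1$ and $k_2$ entries in $N_i$, then $k_1+k_2=k$. The base case $l=1$ is immediate, since a length-one commutator is a single entry, lying in $N_i=\Gamma_1 N_i$ precisely when $k=1$, and such an entry is by definition a product of conjugates of $\alpha_i^{\pm1}$, i.e.\ of length-one linear commutators.

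For the inclusion $C\in\Gamma_kN_i$, the structural input is the standard lower central series estimate $[\Gamma_aN_i,\Gamma_bN_i]\subseteq\Gamma_{a+b}N_i$ together with the fact that $\Gamma_kN_i$ is normal in $F_n$, being a characteristic subgroup of the normal subgroup $N_i$. Indeed, by the induction hypothesis $A\in\Gamma_{k_1}N_i$ and $B\in\Gamma_{k_2}N_i$, whence $C=[A,B]\in\Gamma_{k_1+k_2}N_i=\Gamma_kN_i$ as soon as $k_1,k_2\ge1$. The degenerate case $k_1=0$ (and symmetrically $k_2=0$) is absorbed by normality: then all $k$ entries in $N_i$ lie in $B$, so $B\in\Gamma_kN_i$, and rewriting $C=(\ov B)^{\ov A}\,B$ via (\ref{eq:com0}) displays $C$ as a product of two elements of the $F_n$-normal subgroup $\Gamma_kN_i$.

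To obtain the sharper conclusion I would propagate the refined statement---``$C$ is a product of length-$k$ linear commutators whose entries are conjugates of $\alpha_i^{\pm1}$''---through the same induction. Since $N_i$ is generated by the conjugates of $\alpha_i^{\pm1}$, the factorizations of $A$ and $B$ supplied by the induction hypothesis already have entries of the prescribed type. Expanding $C=[A,B]$ multilinearly by repeated use of (\ref{eq:com2}) reduces the problem to commutators of the form $[a',b']$, where $a'$ and $b'$ are single linear commutators of lengths $k_1$ and $k_2$; the conjugating factors produced in this expansion preserve the entry type by (\ref{eq:com1}). Each $[a',b']$ is then a length-$k$ commutator all of whose entries are conjugates of $\alpha_i^{\pm1}$, and it only remains to rewrite it as a product of \emph{linear} length-$k$ commutators. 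The degenerate case $k_1=0$ is again handled directly by (\ref{eq:com0}), conjugation preserving the linear form via (\ref{eq:com1}).

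The crux is precisely this final linearization. Identity (\ref{eq:com3}) rewrites only the basic non-linear pattern $[[a,b],c]$ as a product of (conjugates of) right-normed triple commutators, so converting a commutator of arbitrary bracketing into a product of linear commutators demands a well-chosen complexity measure---lexicographic in the length and in the depth of left-nesting, say---along which iterated applications of (\ref{eq:com3}), combined with (\ref{eq:com1}), terminate while preserving both the total weight and the multiset of entry types. Alternatively, Lemma \ref{lem:2.2} offers a shortcut, at the cost of only guaranteeing length $\ge k$. Keeping this bookkeeping under control, so that every linear commutator produced retains exactly $k$ entries conjugate to $\alpha_i^{\pm1}$, is the delicate point: the lower central series inclusions make the weight count automatic for the membership $C\in\Gamma_kN_i$, but the explicit linear factorization must be tracked by hand.
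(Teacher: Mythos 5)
Your proof is correct and follows essentially the same route as the paper's: both arguments first reduce $C$ to a length-$k$ commutator with all entries in $N_i$ (you by induction on $l$ together with the estimate $[\Gamma_a N_i,\Gamma_b N_i]\subseteq\Gamma_{a+b}N_i$ and the identity (\ref{eq:com0}) for the degenerate case, the paper by collapsing the non-$N_i$ entries directly with (\ref{eq:com0})), then split the entries into single conjugates of $\alpha_i^{\pm1}$ using (\ref{eq:com2}) and (\ref{eq:com1}), and finally linearize via (\ref{eq:com3}) combined with (\ref{eq:com1}). The linearization step that you flag as the crux is treated at exactly the same level of detail in the paper, which simply asserts that (\ref{eq:com3}) and (\ref{eq:com1}) produce a product of linear commutators whose entries remain single conjugates of $\alpha_i$ or its inverse.
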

\begin{proof}
  By assumption, $C$ is a length $l$ commutator with at least $k$ entries that are products of conjugates of $\alpha_i$ or its inverse.
Using (\ref{eq:com0}) repeatedly, one can write $C$ as a length $k$
commutator, whose entries are all  products of conjugates of $\alpha_i$ or its inverse. This shows that $C\in  \Gamma_k N_i$. 
Now, using  recursively (\ref{eq:com2}) and (\ref{eq:com1}), each such commutator can be expressed as a product of length $k$ commutators, whose entries are a single conjugate of $\alpha_i$ or its inverse. 
By (\ref{eq:com3}), combined with
(\ref{eq:com1}), a length $k$ commutator can be expressed as a product of linear ones, and in our context all $k$ entries will remain a single conjugate of $\alpha_i$ or its inverse.
\end{proof}

For the next two results, let $G$ be a group which is normally
generated by $n$ elements $\alpha_1,\ldots,\alpha_n$.
For each $i\in \{1,\cdots,n\}$, denote by $N_i$ the normal subgroup of
$G$ generated by the $i$th generator.

\begin{lem}\label{lem:itsakindofbasic}
 For any $k\in\N$,
 \[
   \Gamma_{kn+1} G\subset \Gamma_{k+1} N_1\cdots  \Gamma_{k+1} N_n.
 \]
\end{lem}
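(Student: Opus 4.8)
The plan is to reduce an arbitrary generator of $\Gamma_{kn+1}G$ to a product of commutators whose entries are single conjugates of generators, and then combine a pigeonhole count with Lemma~\ref{lem:2.1}. First I would observe that, although Lemmas~\ref{lem:2.2} and~\ref{lem:2.1} are stated for $F_n$, their proofs only use the commutator identities (\ref{eq:cominv})--(\ref{eq:combonus}), which hold in any group; hence both lemmas apply verbatim to $G$. I would also record the structural fact that each $\Gamma_{k+1}N_i$ is characteristic in the normal subgroup $N_i$, hence normal in $G$, so that the product $H:=\Gamma_{k+1}N_1\cdots\Gamma_{k+1}N_n$ is a genuine (normal) subgroup of $G$. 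It therefore suffices to show that every generator of $\Gamma_{kn+1}G$ lies in $H$.

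Recall that $\Gamma_{kn+1}G$ is generated by the commutators of length $\geq kn+1$ with entries in $G$. Fix such a commutator $C$ of length $l\geq kn+1$. Since $G$ is normally generated by $\alpha_1,\dots,\alpha_n$, each entry of $C$ is a product of conjugates of the generators and their inverses. Proceeding exactly as in the proof of Lemma~\ref{lem:2.2}, that is, applying identity (\ref{eq:combonus}) repeatedly to every entry, I would rewrite $C$ as a product of commutators of length $\geq l\geq kn+1$, each of whose entries is a single conjugate of some $\alpha_j$ or its inverse. As $H$ is a subgroup, it is then enough to treat one such factor $C'$ separately.

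Now each entry of $C'$ is a single conjugate of exactly one generator, hence belongs to exactly one of the subgroups $N_1,\dots,N_n$. Since $C'$ has at least $kn+1$ entries distributed among $n$ subgroups, the pigeonhole principle yields an index $i$ for which at least $k+1$ entries of $C'$ lie in $N_i$: if each $N_i$ contained at most $k$ entries, the total would be at most $kn<kn+1$. Applying Lemma~\ref{lem:2.1} to $C'$ with this $i$ then gives $C'\in\Gamma_{k+1}N_i\subset H$. Reassembling the factors yields $C\in H$, which proves the asserted inclusion.

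The step I expect to require the most care is the expansion of $C$ into single-conjugate commutators while keeping control of the length: one must check that iterating (\ref{eq:combonus}) never decreases the length of the commutators produced, since the correction terms such as $\big[[\ov{b},\ov{a}],c\big]$ are strictly longer, so that each resulting factor retains at least $kn+1$ entries and the pigeonhole count survives. This is precisely the bookkeeping already carried out in Lemma~\ref{lem:2.2}, which is why I would cite its proof rather than reprove it.
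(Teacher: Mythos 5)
Your proof is correct and follows essentially the same route as the paper's: expand elements of $\Gamma_{kn+1}G$ via (\ref{eq:combonus}) into products of length $\ge kn+1$ commutators with single-conjugate entries, apply the pigeonhole principle to find an index $i$ with at least $k+1$ entries in $N_i$, and conclude with Lemma~\ref{lem:2.1}. The extra bookkeeping you supply (normality of the product subgroup, length control under (\ref{eq:combonus})) is implicit in the paper's one-line argument.
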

\begin{proof}
Using (\ref{eq:combonus}), any element of  $\Gamma_{kn+1} G$ can be expressed as a product of length $\ge kn+1$ commutators with entries in $\big\{\alpha_j^g,\ov{\alpha}_j^g\, ; \, g\in G\big\}_j$. 
For any such commutator $C$, there exists some $i$ such that at least
$k+1$ entries of $C$ are elements of $N_i$, and Lemma \ref{lem:2.1}
implies that $C\in \Gamma_{k+1} N_i$.
 \end{proof}

Define a \emph{conjugating} endomorphism of $G$ as an endomorphism which sends each generator to a  conjugate of itself. The following is a simple adaptation of \cite[Lem.~2.37]{darne} to our setting.
\begin{lem}\label{lem:Jacko}
For all $k\ge 2$, any conjugating endomorphism $\varphi$ of $G$ induces an
automorphism of $\fract{G}{\Gamma_k G}$. In particular, if $G$ is
nilpotent, then $\varphi$ is itself an automorphism of $G$.
\end{lem}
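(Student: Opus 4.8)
The plan is to reduce the statement to a computation on the associated graded Lie ring of the nilpotent quotient $Q:=\fract{G}{\Gamma_k G}$, on which $\varphi$ will be seen to act as the identity. First, since the lower central series is functorial, any endomorphism — in particular $\varphi$ — satisfies $\varphi(\Gamma_k G)\subseteq \Gamma_k G$, so $\varphi$ descends to an endomorphism $\bar\varphi$ of $Q$. The group $Q$ is nilpotent of class $\le k-1$, and the crucial preliminary observation is that it is honestly generated, not merely normally generated, by the images $\bar\alpha_1,\ldots,\bar\alpha_n$: in a nilpotent group a normally generating set is a generating set, by the usual Frattini-type argument (if a subgroup $K$ satisfies $K\Gamma_2 Q=Q$, then $K\Gamma_{i+1}Q=Q$ for every $i$, hence $K=Q$). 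In particular $Q$ is finitely generated.

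Next I would pass to the associated graded Lie ring $\mathrm{gr}(Q)=\bigoplus_{i=1}^{k-1}\fract{\Gamma_i Q}{\Gamma_{i+1}Q}$. As $\bar\varphi$ preserves each term $\Gamma_i Q$, it induces a graded Lie ring endomorphism $\mathrm{gr}(\bar\varphi)$ of $\mathrm{gr}(Q)$. The key point is that $\mathrm{gr}(\bar\varphi)$ is the identity: on the degree-one part $\fract{Q}{\Gamma_2 Q}=\fract{G}{\Gamma_2 G}$ it sends each $\bar\alpha_i$ to $\bar\alpha_i^{\bar g_i}$, a conjugate of $\bar\alpha_i$, which equals $\bar\alpha_i$ since conjugation is trivial modulo $\Gamma_2$. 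Because $\mathrm{gr}(Q)$ is generated by its degree-one part (this is exactly where the generation statement above is used) and a graded Lie ring endomorphism fixing its degree-one generators is the identity, we conclude $\mathrm{gr}(\bar\varphi)=\mathrm{id}$.

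Finally I would invoke the standard fact that a filtered endomorphism of a group carrying a finite (here nilpotent) lower central filtration, inducing the identity on the associated graded, is automatically an automorphism. Injectivity follows because a nontrivial kernel element $x$, chosen of maximal filtration degree $i$, would satisfy $\bar\varphi(x)\equiv x\not\equiv 1\pmod{\Gamma_{i+1}Q}$, contradicting $\bar\varphi(x)=1$; surjectivity follows by the analogous descending induction on filtration degree. This yields $\bar\varphi\in\Aut(Q)$, which is the first assertion. The ``in particular'' clause is then immediate: if $G$ is nilpotent of class $c$, applying the above with any $k>c$ gives $\Gamma_k G=1$ and $Q=G$, so $\varphi$ is itself an automorphism of $G$.

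The main obstacle is the second step, namely the passage from normal generation to generation (and hence the finite generation of $Q$): the whole argument rests on $\mathrm{gr}(Q)$ being generated in degree one, which fails without it. This is precisely the point where the conjugating hypothesis on $\varphi$ — fixing the degree-one generators up to conjugacy — meshes with the nilpotent structure, and is the content adapted from \cite[Lem.~2.37]{darne}.
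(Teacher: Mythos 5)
Your proof is correct and is essentially the same argument as the paper's: both hinge on the observation that a conjugating endomorphism induces the identity on each graded quotient $\fract{\Gamma_i G}{\Gamma_{i+1}G}$ and then bootstrap up the (finite) lower central filtration, the paper packaging the bootstrap as a Five Lemma induction on the exact sequences $0\to \fract{\Gamma_{i}G}{\Gamma_{i+1}G}\to \fract{G}{\Gamma_{i+1}G}\to \fract{G}{\Gamma_{i}G}\to 0$ where you phrase it as ``identity on the associated graded implies automorphism.'' (One minor quibble: the fact that $\mathrm{gr}(Q)$ is generated in degree one is automatic from $\Gamma_{i+1}=[\Gamma_1,\Gamma_i]$ and does not require the Frattini-type argument; that argument is only needed to see the normal generators generate the abelianization, which is immediate.)
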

\begin{proof}
  As a conjugating endomorphism, $\varphi$ induces the identity on
  $\fract{G}{\Gamma_{2} G}$, and more generally on 
  $\fract{\Gamma_{k-1} G}{\Gamma_{k} G}$ for all $k\geq 2$.
An induction on $k$, using the Five Lemma on the natural exact sequence
\[
  0\longrightarrow \fract{\Gamma_{k-1} G}{\Gamma_{k} G}\longrightarrow
  \fract{G}{\Gamma_{k} G}\longrightarrow \fract{G}{\Gamma_{k-1} G}
  \longrightarrow 0,
\]
then shows that $\varphi$ induces an automorphism of
$\fract{G}{\Gamma_k G}$, for any $k$. \\
In particular, if $G$ is nilpotent of order $N$, then $\varphi$ induces hence an
automorphism of $\fract G{\Gamma_N G}\cong G$.
\end{proof}

\subsection{Basic commutators}\label{sec:basic}

We now recall the notion of basic commutators in a free group, which seems to have first appeared in \cite{Phall}.
\begin{defi}\label{def:basic}
A \emph{set of basic commutators} in the set $\{\alpha_1,\cdots,\alpha_n\}$ is an infinite ordered set of commutators $\big\{C_i\big\}_i$, defined inductively as follows. 
Basic commutators of length $1$ are the $C_i=\alpha_i$ for $i=1,\cdots,n$. 
A basic commutator of length $m>1$ has form $[C_i,C_j]$, for some basic commutators $C_i,C_j$ such that \begin{itemize}
\item length$(C_i)+$length$(C_j)=m$;  
\item $C_i<C_j$, and $C_j=[C_k,C_l]$ further implies that $C_k\le C_i$.
\end{itemize}
Basic commutators of length $m$ follow those of length $<m$, and are ordered in an arbitrary fixed way with respect to each other. 
\end{defi} 

What makes this notion significant is the following fundamental result from \cite[Thm.~11.2.4]{hall}  (see also \cite[Thm.~5.13.A]{MKS}). 
\begin{theo}\label{th:hall}
 If $\big\{C_i\big\}_i$ is a set of basic commutators, and $k\ge 0$ is an integer, then any element $g$ in the free group $F_n$ can be written  in a unique way as a product 
 \[
   g = C_1^{e_1}\cdots C_{N(k)}^{e_{N(k)}} h,
   \]
 with $e_j\in \mathbb{Z}$ and $h\in \Gamma_{k+1}F_n$, where $N(k)$ is the number of basic commutators of length $\le k$. 
\end{theo}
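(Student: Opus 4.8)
The plan is to prove Theorem~\ref{th:hall} (the collection process / basic commutator normal form) by induction on $k$, establishing at each stage that the cosets $\Gamma_k F_n / \Gamma_{k+1} F_n$ are free abelian with an explicit basis given by the basic commutators of length exactly $k$. The statement has two parts: \emph{existence} of such a product decomposition and \emph{uniqueness} of the exponents $e_j$; I would treat these separately, as they rely on different mechanisms.

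For existence, I would run the classical \emph{collection process}. Starting from an arbitrary word $g$ in the generators $\alpha_1,\dots,\alpha_n$, one repeatedly uses the commutator identities (\ref{eq:com2}) and (\ref{eq:com1}) from Section~\ref{sec:LCS} to move occurrences of the lowest-ordered basic commutator to the left, at the cost of introducing higher-length commutator terms. More precisely, whenever two adjacent basic commutators appear out of order, say $C_j C_i$ with $C_i < C_j$, one rewrites $C_j C_i = C_i C_j [C_j^{-1}, C_i^{-1}]^{\,\cdot}$ (adjusting to our sign convention) where the correction term lies in a strictly higher term of the lower central series. Iterating, and using that each correction raises the length, the process terminates modulo $\Gamma_{k+1}F_n$ after finitely many steps, collecting all basic commutators of length $\le k$ to the front in increasing order with integer exponents, leaving a remainder $h \in \Gamma_{k+1}F_n$. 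The key bookkeeping point is that the collection of a length-$m$ commutator only generates terms of length $>m$, so the procedure is well-founded when truncated at level $k$.

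For uniqueness, the cleanest route is to show that the images of the basic commutators of length exactly $k$ form a \emph{free abelian basis} of the quotient $\Gamma_k F_n / \Gamma_{k+1} F_n$. The number $N(k) - N(k-1)$ of such commutators equals the rank predicted by the Witt formula, and one knows independently (e.g.\ via the Magnus embedding into the ring of formal power series in noncommuting variables, or via the free Lie algebra $L(n)$ whose degree-$k$ part has exactly this dimension) that $\Gamma_k F_n / \Gamma_{k+1}F_n$ is free abelian of precisely that rank. Since the basic commutators span the quotient by the existence part and their number matches the rank, they must be a basis, which forces the exponents $e_j$ to be uniquely determined at each length; an induction on $k$ using the exact sequence
\[
  0 \longrightarrow \Gamma_k F_n / \Gamma_{k+1} F_n \longrightarrow F_n/\Gamma_{k+1}F_n \longrightarrow F_n/\Gamma_k F_n \longrightarrow 0
\]
then propagates uniqueness up through all lengths $\le k$.

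The main obstacle is the uniqueness/independence statement rather than existence: proving that the basic commutators of a given length are linearly independent modulo the next LCS term, and not merely a spanning set, requires genuine input beyond elementary commutator gymnastics. The cleanest way to supply this is to invoke the structure of the free Lie algebra and the Hall basis theorem, or equivalently the Magnus expansion, to pin down the rank of $\Gamma_k F_n / \Gamma_{k+1}F_n$. Since this is a classical result quoted from \cite[Thm.~11.2.4]{hall} and \cite[Thm.~5.13.A]{MKS}, in the paper itself I would simply cite these sources rather than reconstruct the independence argument, and devote any self-contained exposition to recalling the collection process that yields existence.
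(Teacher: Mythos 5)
The paper does not prove Theorem~\ref{th:hall} at all: it is quoted verbatim from \cite[Thm.~11.2.4]{hall} and \cite[Thm.~5.13.A]{MKS}, with Remark~\ref{rem:changement_de_bracket} noting only that the nonstandard bracket convention does not affect its validity. Your outline is a correct sketch of precisely the standard proof found in those sources --- existence by Hall's collection process, uniqueness by identifying the length-$k$ basic commutators as a basis of the free abelian group $\Gamma_k F_n/\Gamma_{k+1}F_n$ whose rank is pinned down by the Witt formula via the Magnus expansion or the free Lie algebra --- and you correctly locate the genuinely nontrivial input in the independence step rather than the collection step. Your concluding recommendation to simply cite the classical references is exactly what the paper does, so there is nothing further to reconcile.
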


\begin{rem}\label{rem:changement_de_bracket}
 In \cite{hall,MKS}, the convention $[a,b]=\ov{a}\ov{b}ab$ is used for commutators. 
 Although Definition \ref{def:basic} thus gives \emph{different} sets of basic commutators as the ones used in \cite{hall,MKS}, it does share the same fundamental properties, and in particular Theorem \ref{th:hall}, as well as  the coming Lemma \ref{lem:Levine}. 
 \end{rem}

\subsection{The Magnus expansion}\label{sec:magnus}

Denote by $\mathbb{Z}\langle\langle X_1,\cdots,X_n\rangle\rangle$  the ring of formal power series in the noncommuting variables $X_1,\cdots, X_n$. 
\begin{defi}
The \emph{Magnus expansion} is the group homomorphism 
\[
  E:~F_n\longrightarrow \mathbb{Z}\langle\langle
  X_1,\cdots,X_n\rangle\rangle
  \]
defined by sending $\alpha_i$ to $1+X_i$. 
\end{defi}
It is well-known that $E$ is in fact injective \cite[Thm.~5.6]{MKS}, and that it is well-behaved with respect to the lower central series, in the sense of the following fundamental result \cite{Magnus37,witt}. 
\begin{theo}\label{th:magnus} 
For all $k\ge 1$, we have $f\in \Gamma_kF_n$ if and only if $E(f)=1+ \textrm{(terms of degree $\ge k$)}$. 
\end{theo}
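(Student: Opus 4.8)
The statement to prove is Theorem \ref{th:magnus}, the classical Magnus expansion characterization of the lower central series.

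\textbf{Overview of the plan.} The plan is to prove both implications in Theorem \ref{th:magnus} by comparing the filtration of $F_n$ by the lower central series $\{\Gamma_k F_n\}_k$ with the filtration induced by the augmentation ideal of the power series ring. Write $\mathcal{I}$ for the two-sided ideal of $\mathbb{Z}\langle\langle X_1,\dots,X_n\rangle\rangle$ generated by $X_1,\dots,X_n$, so that $\mathcal{I}^k$ consists of all series whose terms have degree $\ge k$. The content of the theorem is exactly that $E^{-1}(1+\mathcal{I}^k)=\Gamma_k F_n$. I would establish the ``only if'' direction first, as it is the easy inductive half, and then devote the bulk of the work to the ``if'' direction, which requires a dimension/rank count.

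\textbf{The easy direction.} First I would show by induction on $k$ that $f\in\Gamma_k F_n$ implies $E(f)\in 1+\mathcal{I}^k$. The base case $k=1$ is immediate since $E(\alpha_i)=1+X_i\in 1+\mathcal{I}$ and $\mathcal{I}$-membership is preserved by the group operations. For the inductive step, the key computation is that if $E(a)=1+u$ and $E(b)=1+v$ with $u\in\mathcal{I}^p$ and $v\in\mathcal{I}^q$, then the commutator satisfies $E([a,b])\in 1+\mathcal{I}^{p+q}$. This follows by expanding $E([a,b])=E(a)E(b)^{-1}E(a)^{-1}E(b)$ (being careful with the paper's convention $[x,y]=x\bar y\bar x y$) and tracking that the leading-order terms cancel, leaving only terms of degree $\ge p+q$. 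Since $\Gamma_{k+1}F_n=[F_n,\Gamma_k F_n]$, combining $p=1$ with the inductive hypothesis $q=k$ gives the claim.

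\textbf{The hard direction and the main obstacle.} The converse, that $E(f)\in 1+\mathcal{I}^k$ forces $f\in\Gamma_k F_n$, is where the real work lies, and I expect this to be the main obstacle. The standard strategy is to use Theorem \ref{th:hall} (the basic commutator collection process) to control the graded pieces. Writing $f$ via the collecting formula $f=C_1^{e_1}\cdots C_{N(k-1)}^{e_{N(k-1)}}h$ with $h\in\Gamma_k F_n$, I would argue contrapositively: if $f\notin\Gamma_k F_n$, then some exponent $e_j$ on a basic commutator $C_j$ of length $<k$ is nonzero, and I must show this produces a nonvanishing term of degree $<k$ in $E(f)$. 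The crux is a linear-independence statement: the images under $E$ of the \emph{lowest-degree} terms of basic commutators of a fixed length $m$ are linearly independent in the degree-$m$ part of the ring. This is precisely the content that identifies the rank of $\Gamma_m F_n/\Gamma_{m+1}F_n$ with the number of basic commutators of length $m$, and is the arithmetic heart of Witt's theorem. The delicate point is ensuring that the contributions from basic commutators of different lengths do not interfere, which is handled by the degree grading, and that no cancellation can occur among same-length commutators, which reduces to the linear independence of the corresponding Lie elements (the basis of the free Lie algebra in degree $m$). Rather than reprove this from scratch, I would either cite the classical sources \cite{MKS,Magnus37,witt} for the injectivity of $E$ on each graded quotient, or sketch the induction showing that the associated graded map $\Gamma_m F_n/\Gamma_{m+1}F_n\to \mathcal{I}^m/\mathcal{I}^{m+1}$ is injective. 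Given the paper's use of the nonstandard bracket convention (Remark \ref{rem:changement_de_bracket}), I would note that the sign/ordering change does not affect this argument, since linear independence of leading terms is convention-insensitive.
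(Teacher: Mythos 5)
The paper does not prove this statement at all: Theorem \ref{th:magnus} is quoted as a classical result of Magnus and Witt, with citations to \cite{Magnus37,witt} and \cite{MKS}, so there is no in-paper argument to compare against. Your sketch is the standard textbook proof and is correct in outline: the ``only if'' direction by the commutator computation $E(a)E(b)^{-1}E(a)^{-1}E(b)-1=(AB^{-1}-B^{-1}A)A^{-1}B$ with $AB^{-1}-B^{-1}A\in\mathcal{I}^{p+q}$, and the ``if'' direction by collecting $f$ into basic commutators via Theorem \ref{th:hall} and invoking the linear independence of the degree-$m$ principal parts of the length-$m$ basic commutators. One point worth making explicit: that linear independence must be obtained from an independent source (the basis theorem for the free Lie algebra, i.e.\ Witt's theorem, proved e.g.\ via elimination or PBW), because the paper's own Lemma \ref{lem:Levine} --- which asserts the nonvanishing of those principal parts --- is itself deduced \emph{from} Theorem \ref{th:magnus}, so leaning on it here would be circular. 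You correctly route around this by deferring to the classical references rather than to the paper's internal machinery, and your observation that the nonstandard bracket convention is immaterial to the leading-term analysis is consistent with Remark \ref{rem:changement_de_bracket}.
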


Basic commutators are well-behaved with respect to the Magnus expansion, in the sense of the following result, which is outlined by Levine in \cite[pp.~365]{Levine}.
\begin{lem}\label{lem:Levine}
Let $C$ be a basic commutator of length $k$, such that $\alpha_i$ occurs $r_i$ times in $C$ for each $i$.  
We have 
\[
  E(C) = 1+P+\textrm{(terms of degree $\ge k+1$)},
  \]
where $P\neq 0$ is a sum of degree $k$ terms, each involving $r_i$ times the variable $X_i$ ($i=1,\cdots,n$). 
\end{lem}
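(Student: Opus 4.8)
The plan is to induct on the length $k$ of the basic commutator $C$, using the multiplicativity of the Magnus expansion $E$ together with Theorem \ref{th:magnus}. For the base case $k=1$, a basic commutator is a single generator $\alpha_i$, and $E(\alpha_i)=1+X_i$, so $P=X_i$ is a nonzero degree $1$ term involving $X_i$ exactly once, as required. For the inductive step, write $C=[C',C'']$ where $C'$ and $C''$ are basic commutators of lengths $k'$ and $k''$ with $k'+k''=k$, and where $\alpha_i$ occurs $r_i'$ times in $C'$ and $r_i''$ times in $C''$, so that $r_i=r_i'+r_i''$.

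The key computation is to expand $E([C',C''])$. Using our convention $[x,y]=x\ov{y}\,\ov{x}y$ and the fact that $E$ is a ring homomorphism into a ring of formal power series, I would first record that for any $f\in F_n$ with $E(f)=1+u_f$, the inverse satisfies $E(\ov f)=1-u_f+u_f^2-\cdots$. By the inductive hypothesis, $E(C')=1+P'+(\deg\ge k'+1)$ and $E(C'')=1+P''+(\deg\ge k''+1)$ with $P',P''$ the nonzero lowest-degree parts. Multiplying out $E(C)=E(C')E(\ov{C''})E(\ov{C'})E(C'')$ and collecting terms, every contribution of degree $<k$ must cancel — indeed $C\in\Gamma_kF_n$ by construction, so by Theorem \ref{th:magnus} the expansion begins in degree $\ge k$ — and the degree-$k$ part is precisely the commutator bracket $P:=P'P''-P''P'$ of the leading homogeneous pieces. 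Since each monomial of $P'$ has total degree $k'$ and uses $X_i$ exactly $r_i'$ times, and similarly for $P''$, every monomial of $P'P''$ and of $P''P'$ has total degree $k$ and uses $X_i$ exactly $r_i'+r_i''=r_i$ times; hence every monomial of $P$ involves $X_i$ exactly $r_i$ times, as claimed.

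The main obstacle is showing $P\neq 0$, i.e.\ that $P'P''-P''P'$ does not vanish identically. This is the one genuinely nontrivial point, since a priori a Lie-type bracket of two nonzero noncommutative polynomials could cancel. I would resolve it by exploiting the injectivity of $E$ (Theorem \ref{th:hall} / \cite[Thm.~5.6]{MKS}) together with Theorem \ref{th:magnus}: if $P'P''=P''P'$ then the degree-$k$ part of $E(C)$ vanishes, forcing $E(C)=1+(\deg\ge k+1)$ and hence $C\in\Gamma_{k+1}F_n$, contradicting the fact that a length-$k$ basic commutator is a nontrivial element of $\Gamma_kF_n/\Gamma_{k+1}F_n$ — which is exactly the content of the basis theorem (Theorem \ref{th:hall}), since distinct basic commutators of length $k$ are linearly independent modulo $\Gamma_{k+1}F_n$. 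Thus $C\notin\Gamma_{k+1}F_n$, so $P\neq0$. Alternatively, and more self-containedly, one can argue directly in the free associative algebra that the homogeneous Lie polynomials obtained as leading terms of basic commutators are themselves linearly independent, so no such bracket can degenerate; but invoking Theorem \ref{th:hall} is the cleanest route given what is available in the excerpt.
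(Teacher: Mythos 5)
Your proof is correct and follows essentially the same route as the paper: induction on the length, writing $C=[C',C'']$, identifying the degree-$k$ part of $E(C)$ as $\pm(P'P''-P''P')$, and deducing its nonvanishing from the fact that length-$k$ basic commutators are linearly independent modulo $\Gamma_{k+1}F_n$ (Theorem \ref{th:hall}) combined with Theorem \ref{th:magnus}. Your write-up is in fact slightly more careful than the paper's on the cancellation of the terms of degree below $k$ and on tracking the variable counts.
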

In the above, the sum $P$ of lowest degree (non trivial) terms in $E(C)$ is called the \emph{principal part} of $E(C)$. 
\begin{proof}
We proceed by induction on the length $k$, where the case $k=1$ is trivial. 
Let $C$ be a basic commutator of length $k$ for some $k>1$. There exists basic commutators $C_1$ and $C_2$, of respective length $k_1$ and $k_2$, such that $C=[C_1,C_2]$ and $k_1+k_2=k$. 
By induction hypothesis for $i=1,2$ we have that $E(C_i)=1+P_i+\textrm{(degree $>k_i$ terms)}$, with $P_i$ a sum of degree $k_i$ terms with the appropriate occurences of each variables. Thus $E(C) = 1 + P_1P_2-P_2P_1 +\textrm{(degree $>k$ terms)}$, and the conclusion would follow from the fact that $P_1P_2-P_2P_1\neq 0$. In order to see that $P_1P_2-P_2P_1$ is indeed nonzero, observe that length $k$ basic commutators form a basis for $\fract{\Gamma_kF_n}{\Gamma_{k+1}F_n}$, as a consequence of Theorem \ref{th:hall}. This in particular tells us that $C\notin \Gamma_{k+1} F_n$, hence by Theorem \ref{th:magnus} we have $P_1P_2-P_2P_1\neq 0$. 
\end{proof}

\section{Milnor invariants and $k$--reduced free action}\label{sec:12}

The main diagrammatic object of this paper will be the following. 
\begin{defi}\label{def:welded}
Consider the $2$-disk $[0,1]\times [0,1]$, equipped with fixed points $p_i \times \{\e\}$ for $i\in\{1,\cdots,n\}$ and $\e\in \{0,1\}$. 
An $n$-component \emph{welded string link} is the welded equivalence
class of an immersion of $n$ disjoint copies of the unit interval into
$[0,1]\times [0,1]$, such that the $i$th interval runs from $p_i\times
\{0\}$ to $p_i\times \{1\}$, and whose double points are decorated
either as a classical (as in usual knot diagrams) or a virtual
crossing (drawn as transverse double points); see Figure \ref{fig:moo}. 
Here, the \emph{welded equivalence} is generated by the three usual Reidemeister moves involving classical crossings, together with the OC move shown in Figure \ref{fig:moo} and the
\emph{Detour move}, which replaces an arc with only virtual crossings
(possibly none) by another arc with same endpoints and only virtual
crossings (possibly none).
\end{defi}
\begin{figure}
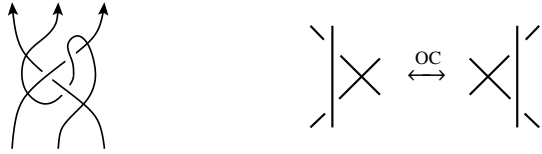

  \[
     \dessin{2cm}{exsl}
    \hspace{2.5cm}
    \dessin{1.75cm}{moves_3}\xleftrightarrow[]{\textrm{OC}}\dessin{1.75cm}{moves_4}
    \]
  \caption{A $3$-component welded string link (left), 
   and the OC move (right)
  }\label{fig:moo}
\end{figure}

In particular, a welded string link without virtual crossing is merely a diagram of a classical string link. 
A key point is that classical string links inject in this way into welded string links: two diagrams without virtual crossings, that are welded equivalent, represent isotopic objects, see \cite[Thm~1.B]{GPV}. 

We denote by $\mathbf{1}$ the trivial string link diagram $\cup_i p_i\times [0,1]$.

\subsection{Brief review of welded Milnor invariants}\label{sec:milnor}

Milnor invariants are classical link invariants defined by Milnor in the fifties \cite{Milnor,Milnor2}, and extended to (classical) string links by Habbeger and Lin \cite{HL}. 
We review here the welded extension of Milnor invariants. It was given in \cite[Sec.~6]{ABMW} using a topological approach, and 
was later reformulated in \cite{MWY} in a purely %
diagrammatic way. 
 
Given a welded string link $L$, there is an associated group $G(L)$, which coincides with the fundamental group of the complement when $L$ is classical, see \cite{Kauffman}. 
As in Wirtinger's algorithm, a generator of $G(L)$ is associated with 
each arc in a diagram of $L$, where now an arc is allowed to contain virtual crossings, and each classical crossing gives a conjugacy relation: 
\[
  \dessin{1.5cm}{Wir1}\ \leadsto\ \overline a\overline ba'b
  \hspace{1cm}
  \dessin{1.5cm}{Wir2}\ \leadsto\ \overline aba'\overline b.\\[-0.1cm]
\]
It is well-known that this is invariant under welded equivalence.

Denote by $L_1, \ldots, L_n$ the components of $L$. 
For each $i$, label by $a_{i,j}$ ($1\le j\le m_i+1$) the successive
arcs met along $L_i$ when following the orientation, where $m_i+1$
denotes the number of arcs of $L_i$, et by $b_{i,j}$ the generator
$a_{k,l}^{\pm1}$ associated with the overpassing arc that is met when running from $a_{i,j}$ to $a_{i,j+1}$, and the local orientation:
\[
\dessin{1.75cm}{Longitude}.
  \]
Then the group of $L$ has a presentation of the form 
\begin{equation}\label{eqG}
G(L) = \big\langle a_{i,j},\ 1 \leq i \leq n,\ 1 \leq j \leq m_i\ \big\vert\ \overline a_{i j+1}\overline b_{i,j}a_{i,j}b_{i,j}\ 1 \leq i \leq n,\ 1 \leq j \leq m_i-1 \big\rangle, 
\end{equation} 

\begin{defi} \label{def:longitude}
For $1 \leq i \leq n$, the \emph{preferred $i$th longitude} $\lambda_i(L)$ is given by 
$\overline a_{i,1}^{f_i}\ b_{i,1}\ b_{i,2}\ \ldots\ b_{i,m_i-1}$, 
where $f_i$ is the sum of the signs of all classical crossings involving only arcs of the $i$th component.
\end{defi}

Let us fix some integer $q\ge 2$. 
It is well-known that the images $\alpha_1, \ldots, \alpha_n$ of the generators $a_{i,1}$ in the quotient $\fract{G(L)}{\Gamma_q G(L)}$, do generate this group  \cite{Chen}. 
In particular, the image of the preferred $i$th longitude in
$\fract{G(L)}{\Gamma_q G(L)}$, can be expressed as a word, which we
still denote by $\lambda_i(L)$, in the variables $\alpha_1, \ldots, \alpha_n$.

\begin{defi}
For each sequence $I = j_1 j_2 \ldots j_l i$ of integers in $\{1,\cdots,n\}$ ($l<q$), the coefficient $\mu_L(I)$  of $X_{j_1} \cdots X_{j_l}$ in the Magnus expansion $E\big(\lambda_i(L)\big)$ is an invariant of the welded string link $L$, called a \emph{welded Milnor invariant}. 
\end{defi}
\begin{rems}\label{rem:pref}$\ $
  \begin{enumerate}
  \item These welded Milnor invariants naturally coincides with the classical construction in the case of classical string links.
  \item  The fact that $\lambda_i(L)$ represents the \emph{preferred}
    $i$th longitude implies that $E\big(\lambda_i(L)\big)$ does not contain any
    term of the form $X_i^s$, for $s\ge 1$, hence that $\mu_L(I)=0$ for
    any sequence of the form $I=ii\cdots i$.
  \end{enumerate}
\end{rems}

Recall from the introduction that, given a sequence $I$, we denote by $r(I)$ the maximum number of time that any index appears in $I$. 
For classical (string) links, Milnor invariants $\mu(I)$ with
$r(I)=1$, that is, non-repeated Milnor invariants, are known to be
link-homotopy invariants \cite{Milnor2,HL}. 
Habegger and Lin actually showed that non-repeated Milnor invariants  classify  string links up to link-homotopy \cite{HL}, a result that was later extended to the welded setting in \cite{ABMW}, where non-repeated Milnor invariants are showed to classify welded string links up to self-virtualization. 
Here, \emph{self-virtualization} is the equivalence relation generated
by the local replacement of a classical self-crossing, by a virtual
one -- what turns out to coincide with link-homotopy for classical
string links (this was implicit in \cite{ABMW} and formally stated in
\cite[Thm. 4.3]{uvw}).

\subsection{The $k$--reduced free action}\label{sec:kred}

As above, let $L$ be an $n$-component welded string link, with associated group $G(L)$.  
By definition, $G(L)$ is normally generated by the initial arcs $\alpha_i:=a_{i,1}$ of each component.  
As in the introduction, we can thus consider the \emph{$k$--reduced quotient} of $G(L)$ 
\[
  \nR_kG(L):= \fract{G(L)}{\Gamma_{k+1} N_1\cdots \Gamma_{k+1} N_n},
  \]
where $N_i$ denotes the normal subgroup generated by $\alpha_i$. 
As a consequence of Lemma \ref{lem:itsakindofbasic}, we have that the group $\nR_kG(L)$ is nilpotent of order at most $kn+1$.

 Let $F_n$ be the free group generated by $\alpha_1,\cdots,\alpha_n$.
We have the following.
\begin{lem}\label{lem:izo}
For each $k\ge 1$, we have an isomorphism 
\[
  \nR_kG(L)\simeq \nR_k F_n= \fract{F_n }{\Gamma_{k+1} N_1\cdots
    \Gamma_{k+1} N_n}.
  \]
\end{lem}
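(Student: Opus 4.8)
The plan is to exhibit the isomorphism $\nR_kG(L)\simeq \nR_k F_n$ by descending the surjection $F_n\twoheadrightarrow G(L)$ to the $k$--reduced quotients and then showing it is an isomorphism. First I would recall that $G(L)$ admits the Wirtinger-type presentation \eqref{eqG}, whose generators $a_{i,j}$ are all conjugates of one another within each component; consequently each $a_{i,j}$ lies in $N_i$, and the map $\pi\colon F_n\to G(L)$ sending $\alpha_i\mapsto a_{i,1}$ is surjective (by Wirtinger, every generator is a conjugate of some initial arc $\alpha_i$, and the relations express each $a_{i,j+1}$ as a conjugate of $a_{i,j}$). The key structural observation is that $\pi$ sends $N_i^{F_n}$ into $N_i^{G(L)}$ (and vice versa, the preimages match up), so $\pi$ carries $\Gamma_{k+1}N_1\cdots\Gamma_{k+1}N_n\subset F_n$ onto $\Gamma_{k+1}N_1\cdots\Gamma_{k+1}N_n\subset G(L)$. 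Hence $\pi$ induces a surjection $\bar\pi\colon \nR_kF_n\twoheadrightarrow \nR_kG(L)$.

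The real content is injectivity of $\bar\pi$. Here I would use that, by Lemma \ref{lem:itsakindofbasic}, the $k$--reduced quotient $\nR_kG(L)$ is nilpotent of class at most $kn$, so $\Gamma_{kn+1}G(L)\subset \Gamma_{k+1}N_1\cdots\Gamma_{k+1}N_n$, and likewise for $F_n$. This lets me factor the problem through the nilpotent quotients $\fract{G(L)}{\Gamma_{kn+1}G(L)}$ and $\fract{F_n}{\Gamma_{kn+1}F_n}$. The plan is then to invoke the classical fact (essentially Chen's theorem / the Wirtinger relations being consequences of conjugation) that $\pi$ induces an isomorphism on the nilpotent quotients $\fract{F_n}{\Gamma_q F_n}\xrightarrow{\sim}\fract{G(L)}{\Gamma_q G(L)}$ for every $q$ --- this is the statement, already cited above via \cite{Chen}, that the meridians $\alpha_1,\dots,\alpha_n$ generate $\fract{G(L)}{\Gamma_q G(L)}$, combined with the fact that $G(L)$ is the group of a string link and hence has the lower central series of a free group on the meridians. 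Once the nilpotent quotients are identified, the further quotient by the images of $\Gamma_{k+1}N_1\cdots\Gamma_{k+1}N_n$ is identified too, giving $\bar\pi$ an inverse.

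Concretely, I would argue as follows. Consider the commutative diagram whose rows are the defining quotients:
\[
  \fract{F_n}{\Gamma_{k+1}N_1\cdots\Gamma_{k+1}N_n}\xrightarrow{\ \bar\pi\ }
  \fract{G(L)}{\Gamma_{k+1}N_1\cdots\Gamma_{k+1}N_n}.
\]
Since both sides are nilpotent of class $\le kn$ by Lemma \ref{lem:itsakindofbasic}, and since $\pi$ induces an isomorphism $\fract{F_n}{\Gamma_{kn+1}F_n}\xrightarrow{\sim}\fract{G(L)}{\Gamma_{kn+1}G(L)}$ carrying the image of $N_i$ isomorphically onto the image of $N_i$, it carries the image of $\Gamma_{k+1}N_1\cdots\Gamma_{k+1}N_n$ isomorphically onto its counterpart. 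Passing to the quotient by these matched subgroups yields that $\bar\pi$ is an isomorphism. I would record explicitly that $\pi(N_i)=N_i$ as subgroups of the relevant quotient, because this is what guarantees the denominators correspond.

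The main obstacle I anticipate is establishing that $\pi$ really does induce an isomorphism on nilpotent quotients and that it matches the normal subgroups $N_i$ on the two sides --- this is the point where one genuinely uses properties of string link groups (the Wirtinger relations being commutator identities, so that they vanish in each $\fract{G(L)}{\Gamma_qG(L)}$ exactly as in the free group), rather than formal nonsense. The surjectivity half is essentially bookkeeping, and the reduction to finite nilpotency class via Lemma \ref{lem:itsakindofbasic} is immediate; but verifying that $\ker\pi$ is contained in $\Gamma_{k+1}N_1\cdots\Gamma_{k+1}N_n$ after passing to the $k$--reduced quotient --- equivalently, that no relation of $G(L)$ survives nontrivially in $\nR_kF_n$ --- requires the full strength of the fact that $G(L)$ has the same lower central series quotients as $F_n$ on the meridian generators. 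I would make sure to cite \cite{Chen} (and the fact that string link longitudes lie in $\Gamma_2$) at precisely this step.
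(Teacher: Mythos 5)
Your proposal is correct and follows essentially the same route as the paper: reduce to the nilpotent quotients $\fract{G(L)}{\Gamma_{kn+1}G(L)}\simeq\fract{F_n}{\Gamma_{kn+1}F_n}$ via Lemma \ref{lem:itsakindofbasic}, invoke the known meridian-induced isomorphism of these quotients (which the paper cites from \cite{ABMW} rather than \cite{Chen}, since $L$ is a \emph{welded} string link), and observe that it matches the subgroups $N_i$ so the further quotients agree. Your explicit check that the denominators correspond is a point the paper leaves implicit, but the argument is the same.
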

\begin{proof}
 As shown in \cite[\S 5]{ABMW} (see also \cite[\S 6.3]{arrow}), for each $q\ge 1$ we have an isomorphism 
 \[
   \fract{G(L)}{\Gamma_qG(L)}\simeq \fract{F_n}{\Gamma_{q} F_n}.
   \]
   Hence, by Lemma \ref{lem:itsakindofbasic}, we have  
 \[
   \nR_k G(L) = \nR_k\!\left(\!\fract{G(L)}{\Gamma_{kn+1} G(L)}\right)
   \simeq \nR_k\!\left(\!\fract{F_n}{\Gamma_{kn+1} F_n}\right) = \nR_k F_n.\qedhere
   \]
\end{proof}

For each $i$, consider the $i$th preferred longitude of $L$ from Definition \ref{def:longitude}. This defines an element $\lambda^k_i(L)$ in $\nR_kG(L)\simeq \nR_k F_n$, called the \emph{$k$--reduced $i$th longitude} of $L$. 

\medskip

Denote by  $\Aut_c(\nR_kF_n)$ the set of conjugating automorphisms of $\nR_kF_n$, which are automorphisms sending each generator to a conjugate of itself. 
Since $\nR_kF_n$ is nilpotent, by Lemma \ref{lem:Jacko}, any conjugating endomorphism of $R_k F_n$ is in $\Aut_c(\nR_kF_n)$, for all $k\ge 1$.  

\begin{defi}\label{def:action}
The \emph{$k$--reduced free action} associated with $L$, 
\[
  \varphi^{(k)}_L\in \Aut_c(\nR_kF_n),
  \]
is defined by sending each generator $\alpha_i$ to its conjugate by $\lambda^k_i(L)$. 
\end{defi}

This action can be seen as a generalization of Habegger-Lin's representation for the group of link-homotopy classes of string links \cite[Thm.~1.7]{HL}, 
in the sense that the case $k=1$ recovers their construction. 

\subsection{Milnor invariants and $k$--reduced free action} \label{sec:1>2} 

The main purpose of this section is Theorem \ref{thm:12}, which implies the equivalence   (1)$\Leftrightarrow$(2) in our main theorem. 

Throughout the rest of this paper, we shall make use of the following. 
\begin{nota}
Recall that $N_i$ denotes the normal subgroup of $F_n$ generated by
the $i$th generator $\alpha_i$ ($i\in \{1,\cdots,n\}$). 
For $k\ge 1$, we set
\[
  J^k:=\Gamma_{k+1}N_1\cdots \Gamma_{k+1}N_n
 \,\,\, \textrm{ and }\,\,\, 
  J_i^k:=\Gamma_{k+1}N_1\cdots \Gamma_{k}N_i\cdots \Gamma_{k+1}N_n.
\]
Denote also by $R^k$ the two-sided ideal of $\mathbb{Z}\langle\langle
X_1,\cdots,X_n\rangle\rangle$ generated by all terms having at least
$k+1$ occurences of some variable, and by $R^k_i$ the ideal generated by terms having either at least $k+1$ occurences of some variable, or $k$ occurences of the variable $X_i$.  
\end{nota}

\begin{theo}\label{thm:12}
 Let $L$ and $L'$ be two $n$-component welded string links. The following are equivalent: 
 \begin{enumerate}
  \item[(i)] $\mu_L(I)=\mu_{L'}(I)$ for any sequence $I$ with $r(I)\le k$.  
  \item[(ii)] the $k$--reduced $i$th longitudes $\lambda^k_i(L)$ and $\lambda^k_i(L')$ are congruent modulo $J_i^k$, for all $i$.  
  \item[(iii)] $L$ and $L'$ induce the  same $k$--reduced free action $\varphi_L^{(k)}=\varphi_{L'}^{(k)}\in \Aut_c(\nR_kF_n)$. 
\end{enumerate}
\end{theo}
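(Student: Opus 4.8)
The plan is to prove the cyclic chain of implications $(i)\Rightarrow(ii)\Rightarrow(iii)\Rightarrow(i)$, and the whole argument will be organized around the interplay between the Magnus expansion and the two nested ideals $J_i^k\subset J^k$ on the group side, matched with $R_i^k$ on the power-series side. The conceptual heart is the following dictionary: the longitude $\lambda_i(L)$ is determined by the $\mu_L(I)$ through its Magnus expansion (Theorem \ref{th:magnus}), Milnor invariants with $r(I)\le k$ are exactly the coefficients of monomials in which each variable $X_j$ occurs at most $k$ times, and conjugating $\alpha_i$ by $\lambda_i(L)$ multiplies the generator by a commutator $[\ov{\alpha}_i,\lambda_i(L)]$ that lands in $N_i$. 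The key bookkeeping is therefore to understand precisely which Magnus coefficients of $\lambda_i(L)$ are invariant modulo $J_i^k$ versus $J^k$.

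\textbf{Implication $(i)\Rightarrow(ii)$.} First I would make precise the translation between the congruence $\lambda_i^k(L)\equiv\lambda_i^k(L')\ (\mathrm{mod}\ J_i^k)$ and the vanishing of certain Milnor coefficients of $\lambda_i(L)\ov{\lambda_i(L')}$. Working in the free group $F_n$ and using Theorem \ref{th:magnus} together with Lemma \ref{lem:Levine}, I want to show that the image of an element in $\fract{F_n}{J_i^k}$ is detected exactly by its Magnus coefficients lying outside the ideal $R_i^k$: that is, $f\in J_i^k$ if and only if $E(f)-1\in R_i^k$. The forward direction follows from Lemma \ref{lem:2.1}, which says a length-$l$ commutator with $k$ entries in $N_i$ lies in $\Gamma_k N_i$ and is a product of linear commutators each having $k$ conjugates of $\alpha_i^{\pm1}$; applying the Magnus expansion, such a commutator contributes only monomials with $\ge k$ occurrences of $X_i$ (for the $\Gamma_k N_i$ factor) or $\ge k+1$ occurrences of some variable (for the other $\Gamma_{k+1}N_j$ factors), which is precisely $R_i^k$. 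The reverse inclusion is the more delicate half and requires Theorem \ref{th:hall}, using a basis of basic commutators to show that no element outside $J_i^k$ can have its expansion trapped inside $R_i^k$. Granting this equivalence, hypothesis $(i)$ says $E(\lambda_i(L))$ and $E(\lambda_i(L'))$ agree on all monomials with each variable occurring $\le k$ times, i.e. they differ by an element of $R_i^k$ (note that the preferred normalization in Remark \ref{rem:pref}(2) kills the pure $X_i^s$ terms, so the relevant ideal for the $i$th longitude is indeed $R_i^k$ and not $R^k$), and the equivalence then yields $(ii)$.

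\textbf{Implication $(ii)\Rightarrow(iii)$.} Here I would show that the automorphism $\varphi_L^{(k)}$ depends on the longitudes only through their classes modulo $J_i^k$. The point is that $\varphi_L^{(k)}(\alpha_i)=\ov{\lambda_i^k(L)}\,\alpha_i\,\lambda_i^k(L)$, and I must check that modifying $\lambda_i^k(L)$ by an element of $J_i^k$ does not change this conjugate inside $R_kF_n=\fract{F_n}{J^k}$. Concretely, if $\lambda_i^k(L)=\lambda_i^k(L')\cdot c$ with $c\in J_i^k$, then the two conjugates of $\alpha_i$ differ by a commutator-type correction involving $c$ and $\alpha_i$; since $c\in J_i^k$ contributes an extra copy of $\alpha_i$ (it already has $\ge k$ entries in $N_i$, and conjugating $\alpha_i$ by it pushes into $\Gamma_{k+1}N_i$), the correction lands in $\Gamma_{k+1}N_i\subset J^k$ and hence is trivial in $R_kF_n$. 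This is exactly why the definition uses the shifted ideal $J_i^k$ (with $\Gamma_k N_i$ in the $i$th slot) rather than $J^k$ itself: the extra factor of $\alpha_i$ supplied by conjugation promotes $\Gamma_kN_i$ to $\Gamma_{k+1}N_i$. I expect this to be the main obstacle, as it demands a careful commutator computation using identities \eqref{eq:com0}--\eqref{eq:com3} to verify the correction term genuinely lies in $\Gamma_{k+1}N_i$, and one must check it on all generators simultaneously while respecting that $R_kF_n$ is only nilpotent, not free.

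\textbf{Implication $(iii)\Rightarrow(i)$.} This should be the most direct direction: the action $\varphi_L^{(k)}$ determines each conjugate $\ov{\lambda_i^k(L)}\alpha_i\lambda_i^k(L)$ in $R_kF_n$, and from the conjugating data I recover $\lambda_i^k(L)$ modulo its centralizer, hence modulo $J_i^k$ by the same commutator analysis as above run in reverse. Applying the equivalence established in the first step ($f\in J_i^k\iff E(f)-1\in R_i^k$), the class of $\lambda_i^k(L)$ modulo $J_i^k$ recovers exactly the Milnor coefficients $\mu_L(I)$ with $r(I)\le k$, so $(iii)$ forces $(i)$. Throughout, the technical backbone is Lemmas \ref{lem:2.1} and \ref{lem:Levine} together with Theorems \ref{th:hall} and \ref{th:magnus}, which let me pass freely between commutator-length/occurrence data in the group and monomial-occurrence data in the Magnus ring; the cleanest write-up will prove the group/power-series ideal dictionary once as a standalone step and then invoke it in each implication.
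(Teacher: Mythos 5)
Your overall route --- the cycle (i)$\Rightarrow$(ii)$\Rightarrow$(iii)$\Rightarrow$(i) anchored by the dictionary $f\in J_i^k\iff E(f)-1\in R_i^k$ --- is essentially the paper's: that dictionary is exactly Proposition \ref{th:kredinjectivity}, proved there just as you sketch (forward direction from Lemma \ref{lem:2.1}, converse from basic commutators via Theorem \ref{th:hall} and Lemma \ref{lem:Levine}), and your (ii)$\Rightarrow$(iii) is the paper's computation that $\big[\lambda_i^k(L)\ov{\lambda_i^k(L')},\alpha_i\big]\in J^k$. Two small inaccuracies there: the correction term lies in $J^k$ but not in $\Gamma_{k+1}N_i$ alone, since the factors of $c$ lying in $\Gamma_{k+1}N_j$ for $j\ne i$ contribute to $\Gamma_{k+1}N_j$ (this does not affect the conclusion); and in (i)$\Rightarrow$(ii) the reason the relevant ideal is $R_i^k$ rather than $R^k$ is not the preferred normalization but the trailing index $i$ in $I=j_1\cdots j_l\,i$, which caps the occurrences of $i$ among $j_1,\ldots,j_l$ at $k-1$.

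The genuine gap is in (iii)$\Rightarrow$(i). You assert that from $\alpha_i^{\lambda_i^k(L)}=\alpha_i^{\lambda_i^k(L')}$ in $\nR_kF_n$ you \quote{recover $\lambda_i^k(L)$ modulo its centralizer, hence modulo $J_i^k$ by the same commutator analysis run in reverse}. This conflates two different subsets and skips the hard half of the theorem. The easy direction is $c\in J_i^k\Rightarrow[c,\alpha_i]\in J^k$; the converse is false as stated, because $c=\alpha_i^q$ also satisfies $[c,\alpha_i]=1$, so the set $\{c:[c,\alpha_i]\in J^k\}$ strictly contains $J_i^k$. Even the corrected statement is not obtained by \quote{running the forward computation in reverse}: it needs an independent argument. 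The paper supplies one via the Magnus expansion: writing $E\big(\lambda_i^k(L)\ov{\lambda_i^k(L')}\big)\equiv 1+S_q+(\textrm{higher order})$ mod $R_i^k$ with $S_q$ the lowest-degree part not in $R_i^k$, the hypothesis (iii) forces $X_iS_q-S_qX_i$ to vanish modulo $R^k$, hence $X_iS_q=S_qX_i$, hence $S_q=X_i^q$ by a combinatorial observation --- and this last possibility is excluded precisely because the longitudes are \emph{preferred} (Remark \ref{rem:pref}). So the preferred-longitude normalization, which you invoke only in (i)$\Rightarrow$(ii) where it is not the operative point, is in fact the indispensable ingredient of (iii)$\Rightarrow$(i); without it the implication fails. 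Your proposal as written contains neither this argument nor any substitute for it.
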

\medskip 

The rest of this section is devoted to the proof of Theorem \ref{thm:12}, which is done in three steps. 
\medskip 

Firstly, we prove that (ii)$\Rightarrow$(iii). 
Suppose that the $k$--reduced $i$th longitudes $\lambda^k_i(L)$ and $\lambda^k_i(L')$ are congruent modulo $J_i^k$, for all $i$. 
We have $ \lambda^k_i(L) \overline{\lambda^k_i(L')}=X g_i$,
for some $g_i\in \Gamma_kN_i$ and some $X\in \prod_{j\neq i} \Gamma_{k+1}N_j$.
By (\ref{eq:com2}) we thus have that 
\[
  \big[\lambda^k_i(L)\overline{\lambda^k_i(L')},\alpha_i\big] =
  [g_i,\alpha_i]^{\ov{X}}[X,\alpha_i] \in J^k.
  \]
This readily  implies that $\big[\alpha_i,\lambda^k_i(L) \big]\equiv \big[\alpha_i,\lambda^k_i(L')\big]$ mod $J^k$, which is equivalent to saying that $L$ and $L'$ induce the same $k$--reduced free action: $\varphi_L^{(k)}=\varphi_{L'}^{(k)}\in \Aut_c(R_{k}F_n)$.  
\medskip 

Secondly, we prove that (iii)$\Rightarrow$(i). 
If $\varphi_L^{(k)}=\varphi_{L'}^{(k)}\in \Aut_c(R_{k}F_n)$, then for each $i$ we have $\alpha_i^{\lambda^k_i(L)}\equiv \alpha_i^{\lambda^k_i(L')}$ mod $J^k$, which is equivalent to 
\[
  \alpha_i
  \big(\lambda^k_i(L)\overline{\lambda^k_i(L')}\big)\overline{\alpha}_i\equiv
  \lambda^k_i(L)\overline{\lambda^k_i(L')} \textrm{ mod $J^k$.}
  \]
Taking the Magnus expansion then gives  
\begin{equation}\label{eq:Magnus}
 X_i E \big (\lambda^k_i(L)\overline{\lambda^k_i(L')}\big)  - E \big (\lambda^k_i(L)\overline{\lambda^k_i(L')}\big) \big (X_i-X_i^2+\cdots \big) - X_i  E \big (\lambda^k_i(L)\overline{\lambda^k_i(L')}\big)\big(X_i-X_i^2+\cdots \big) \in R^k. 
 \end{equation}

If $E \big(\lambda^k_i(L)\big)-E \big(\lambda^k_i(L')\big)$ lives in $R^k_i$, then $L$ and $L'$ have same 
Milnor invariants $\mu(I)$ with $r(I)\le k$. 
Suppose by contradiction that 
\[
  E \big(\lambda^k_i(L) \big)-E \big(\lambda^k_i(L')\big) \equiv S_q +
  \left(\textrm{terms of degree $>q$}\right)  \textrm{ mod $R^k_i$},
  \]
for some $q$ and a sum $S_q$ of degree $q$ terms which are \emph{not} in $R^k_i$. 
Multiplying the above by $E \big(\overline{\lambda^k_i(L')}\big)$,
which by definition has constant term equal to $1$, we have  
\[
  E \big(\lambda^k_i(L)\overline{\lambda^k_i(L')}\big) \equiv 1+ S_q +
  \left(\textrm{terms of degree $>q$}\right)  \textrm{ mod $R^k_i$}.
  \]
Equation  (\ref{eq:Magnus}) then gives 
\[
  X_i E \big(\lambda^k_i(L)\overline{\lambda^k_i(L')}\big)  - E \big(\lambda^k_i(L)\overline{\lambda^k_i(L')}\big)  X_i\equiv 
 X_i S_q - S_q X_i + \left(\textrm{terms of degree $>q+1$}\right) 
 \textrm{ mod $R^k_i$}.
 \]
Observe that $X_i S_q$ and $S_q X_i$ are not in $R^k$, since $S_q\notin R_i^k$.
Hence, if $X_i S_q\neq S_q X_i$, we obtain a contradiction with (\ref{eq:Magnus}). 
But if  $X_i S_q=S_q X_i$, a simple combinatorial argument shows that $S_q$ can be nothing else than the monomial $X_i^q$. 
This tells us that $E \big(\lambda^k_i(L) \big)-E
\big(\lambda^k_i(L')\big)$ contains the term $X_i^q$, which is in
contradiction with the fact that $\lambda^k_i(L)$ and $\lambda^k_i(L')$
are images of \emph{preferred} $i$th longitudes, see Remark \ref{rem:pref}~(ii). 
\medskip 

 Thirdly and lastly, let us prove that (i)$\Rightarrow$(ii). 
For this purpose, we prove the following, which should be thought of as a \quote{$k$--reduced} version of the injective property of the Magnus expansion (Theorem~\ref{th:magnus}). 
\begin{prop}\label{th:kredinjectivity}
Let $g$ be an element of $F_n$. Then $g\in J_i^k$ if and only if $E(g)\in 1+R_i^k$. 
\end{prop}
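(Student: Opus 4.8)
The plan is to prove Proposition \ref{th:kredinjectivity} by establishing each implication separately, exploiting the explicit correspondence between the group-theoretic filtration by $J_i^k$ and the ideal-theoretic filtration by $R_i^k$ under the Magnus expansion. The forward direction $g\in J_i^k \Rightarrow E(g)\in 1+R_i^k$ is the easier half and should essentially follow from the building blocks already assembled in Section \ref{sec:1}. The reverse direction is where the real content lies.

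\emph{The forward implication.} First I would observe that $J_i^k=\Gamma_{k+1}N_1\cdots\Gamma_kN_i\cdots\Gamma_{k+1}N_n$ is generated by commutators sitting in the various $\Gamma_{k+1}N_j$ (for $j\neq i$) and in $\Gamma_kN_i$. It suffices to check the image under $E$ of a single generator of each factor. By Lemma \ref{lem:2.1}, any element of $\Gamma_{k+1}N_j$ is a product of length $k+1$ linear commutators whose entries are all conjugates of $\alpha_j^{\pm1}$; applying Lemma \ref{lem:Levine} (or rather the basic estimate $E([x,y])=1+(\text{terms of degree}\ge 2)$ together with the occurrence count), each such commutator maps under $E$ to $1$ plus terms in which $X_j$ occurs at least $k+1$ times, hence lies in $1+R^k\subseteq 1+R_i^k$. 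Likewise any element of $\Gamma_kN_i$ maps to $1$ plus terms with at least $k$ occurrences of $X_i$, which lie in $1+R_i^k$ by definition of $R_i^k$. Since $1+R_i^k$ is a multiplicatively closed coset (as $R_i^k$ is a two-sided ideal), products of such elements stay in $1+R_i^k$, giving the claim.

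\emph{The reverse implication.} This is the harder direction and I expect it to be the main obstacle. The strategy is to argue by the contrapositive, or equivalently by a degree-by-degree reduction using basic commutators, in the spirit of the classical proof of Theorem \ref{th:magnus}. Suppose $g\notin J_i^k$; I want to produce a nonzero term of $E(g)$ lying outside $R_i^k$, i.e.\ a term in which every variable occurs at most $k$ times and $X_i$ occurs at most $k-1$ times. Using the basic commutator decomposition of Theorem \ref{th:hall}, write $g$ modulo a high term $\Gamma_{N}F_n$ (with $N=kn+1$, say, so that $R_kF_n$ is the relevant nilpotent quotient) as a product of powers of basic commutators. The key point is to understand which basic commutators lie in $J_i^k$: by Lemmas \ref{lem:2.2} and \ref{lem:2.1}, a basic commutator $C$ lies in $J_i^k$ precisely when some variable $X_j$ occurs at least $k+1$ times in $C$, or $X_i$ occurs at least $k$ times. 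By Lemma \ref{lem:Levine}, the principal part of $E(C)$ is a nonzero sum of monomials with exactly the occurrence profile of $C$. Thus the basic commutators \emph{not} in $J_i^k$ are exactly those whose principal parts are supported outside $R_i^k$.

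The remaining difficulty, and where I would focus the most care, is the standard but delicate linear-independence argument: if $g\notin J_i^k$, then in its basic commutator expansion some commutator $C$ with $E(C)\notin 1+R_i^k$ appears with nonzero exponent, and I must ensure that its principal part is not cancelled by contributions from other commutators when passing to $E(g)$. The cleanest way is to work one total-degree $q$ at a time: the principal parts of the length-$q$ basic commutators are linearly independent in the degree-$q$ part of $\Z\langle\langle X_1,\dots,X_n\rangle\rangle$ (this is exactly the content underlying Theorem \ref{th:hall} and Lemma \ref{lem:Levine}, namely that length-$q$ basic commutators form a basis of $\Gamma_qF_n/\Gamma_{q+1}F_n$). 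Quotienting the degree-$q$ part by the span of monomials in $R_i^k$, the images of the principal parts of those length-$q$ basic commutators lying outside $J_i^k$ remain linearly independent, so a nonzero combination of them cannot vanish modulo $R_i^k$. Taking $q$ minimal among degrees where $g$ has a basic-commutator factor outside $J_i^k$ then yields a nonzero contribution to $E(g)$ outside $R_i^k$, contradicting $E(g)\in 1+R_i^k$. This establishes $E(g)\in 1+R_i^k \Rightarrow g\in J_i^k$ and completes the proof.
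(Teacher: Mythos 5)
Your proof is correct and takes essentially the same approach as the paper's: the forward implication via elementary occurrence-counting in the Magnus expansion, and the converse via the basic commutator decomposition of Theorem \ref{th:hall}, the principal-part computation of Lemma \ref{lem:Levine}, linear independence of principal parts modulo $R_i^k$, and a minimal-degree argument. The only cosmetic difference is that you phrase the converse as a contrapositive where the paper argues by contradiction.
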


Assuming this result momentarily, we immediately deduce the desired implication (i)$\Rightarrow$(ii) of Theorem \ref{thm:12}. 
Indeed, consider two welded string links  $L$ and $L'$ with same 
Milnor invariants $\mu(I)$ with $r(I)\le k$. 
This precisely means that, for each $i$, their respective $i$th preferred longitudes $\lambda^k_i(L)$ and $\lambda^k_i(L')$ satisfy 
\[
  E \big(\lambda^k_i(L') \big) \equiv  E \big(\lambda^k_i(L) \big)
  \textrm{ mod $R_i^k$}.
  \]
This rewrites as $E \big (\overline{\lambda^k_i(L)}\lambda^k_i(L') \big)=1+R_i^k$, which by Proposition \ref{th:kredinjectivity} gives that $\overline{\lambda^k_i(L)}\lambda^k_i(L')\in J_i^k$, as desired.
\medskip

\begin{proof}[Proof of Proposition \ref{th:kredinjectivity}]
The \quote{only if} part of the statement follows easily from well-known properties of the Magnus expansion \cite{MKS}, so we prove here the other implication. 
By Theorem \ref{th:hall},  there is a set of basic commutators $\{C_i\}_i$ such that
\[
  g = C_1^{e_1}\cdots C_{N(nk)}^{e_{N(nk)}} h,
  \]
for some unique integers $e_1,\cdots,e_{N(nk)}\in \mathbb{Z}$ and a unique element $h\in \Gamma_{nk+1}F_n$.
Set $g_j:=\prod_{\textrm{length$(C_{j_m})=j$}} C_{j_m}^{e_{j_m}}$, so that $g = g_1\cdots g_{kn} h$. 
Since $h\in J_i^k$, it remains to show that $g_j\in J_i^k$ for all $j$.  
Suppose by contradiction that this is not the case, and let $p$ be the smallest integer such that 
$g_p=\prod_{m=1}^N C_{p_m}^{e_{p_m}}$ is not in $J_i^k$ ($N\ge 1$). 
This means that there is a nonempty subset $S$ of $\{1,\cdots,N\}$ such that $e_{p_l}\neq 0$ and $C_{p_l}\notin J_i^k$, for all $l\in S$.
\\
Lemma \ref{lem:Levine} tells us that, for each $m$, the principal part of $E(C_{p_m})$ is either in $R_i^k$, or is in the $\mathbb{Z}$-module generated by monomials with $<(k-1)$ copies of $X_i$ and $<k$ copies of any other variable.
This implies in particular that the sum of the principal parts of $E \big (\prod_{l\in S} C_{p_l}^{e_{p_l}}\big)$ is not in $R_i^k\setminus \{0\}$. 
But as a property of basic commutators, we know that these principal
parts are linearly independent \cite{MKS}; it follows that $E(g_p)-1$ is non
trivial and not in $R_i^k$. 
By minimality of $p$, this implies that $E(g)-1$ is not in $R_i^k$. 
This is a contradiction, which concludes the proof.
\end{proof}

\begin{rem}
The case $k=1$ of Proposition \ref{th:kredinjectivity} was previously established in \cite[Prop.~7.10]{ipipipyura}.
One can actually further generalize this result as follows. 
Let $\mathbf{r}=(r_1,\cdots,r_n)$ be an $n$-tuple of positive integers. 
Consider the subgroup $J^\mathbf{r}:=\Gamma_{r_1}N_1\cdots \Gamma_{r_n}N_n$ of $F_n$, and the ideal $R^\mathbf{r}$ of $\mathbb{Z}\langle\langle X_1,\cdots,X_n\rangle\rangle$ generated by all terms where the variable $X_i$ appears at least $r_i$ times, for $i=1,\cdots,n$.  
Then the above proof generalizes in a straightforward way to show that
an element $g\in F_n$ is in $J^\mathbf{r}$ if and only if $E(g)$ is in $1+R^\mathbf{r}$.
\end{rem}

\section{Arrow calculus and self $w_k$-concordance}\label{sec:weldedwk}

\subsection{Arrow calculus for welded objects}\label{sec:arrow}

Let us briefly review arrow calculus, which is diagrammatic calculus developped in \cite{arrow} for the study of  welded objects.
In Section \ref{sec:formalism}, we explain how this diagrammatic tool is intimately related to the commutator
calculus reviewed in Section \ref{sec:1}.

Let $L$ be an $n$-component welded string link.

\begin{defi}
A \emph{$w$-tree} for $L$ is a planar immersion of an oriented, connected uni-trivalent tree $T$, such that
\begin{itemize}
	\item the set of all vertices of $T$ is embedded in the interior of $[0,1]\times [0,1]$, such that trivalent vertices are disjoint from $L$ and univalent vertices are in $L\setminus \{\textrm{crossings of $L$}\}$;
	\item at each trivalent vertex, the three incident edges are cyclically ordered, and there are two ingoing and one outgoing edge; 
	\item edges of $T$ may cross virtually $L$ or $T$ itself;
	\item edges are possibly decorated by some \emph{twists} $\bullet$, which are disjoint from all crossings and vertices, and which satisfy the rule $\,\,\includegraphics[scale=1]{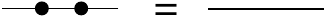}$. 
\end{itemize}
A univalent vertex of $T$ is a \emph{head} if $T$ is locally oriented towards $L$, and it is a \emph{tail} otherwise. 
For a union of $w$-trees for $L$, we allow virtual crossings among
edges, and we require all vertices to be pairwise disjoint. See Figure \ref{fig:wTree} for an example.
\end{defi}
We note that a $w$-tree contains a single head, due to the orientation convention at trivalent vertices.  

\begin{figure}
  \[
\dessin{3cm}{wTree}
    \]
  \caption{An example of $w$-tree for the trivial $3$--component
    string link}
  \label{fig:wTree}
\end{figure}

\begin{defi}\label{def:lineartree}
A $w$-tree is \emph{linear}, if it has the following shape as an abstract tree    
\[
  \textrm{\includegraphics[scale=0.7]{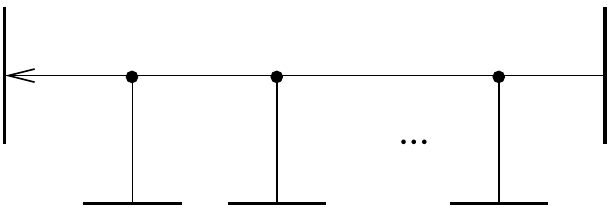}}
  \]
with possibly a number of $\bullet$ on its edges. 
\end{defi}

\begin{defi}
The \emph{degree} of $T$ is its number of tails or, equivalently, half the total number of vertices. 
A degree $k$ $w$-tree is called a \emph{$w_k$-tree}, and a $w_1$-tree is also called a $w$-arrow.  
\end{defi}

Given a union of $w$-arrows $A$ for $L$, there is a notion of  \emph{surgery along $A$}, which produces a new welded string link $L_A$ as follows:  
\[
  \textrm{\includegraphics{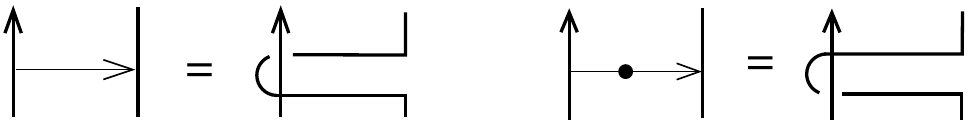}. }
\]
In general, if $A$ contains some virtual crossing, this likewise introduces pairs of virtual crossings in $L_A$.

Now, given a union of $w_k$-trees $P$ for $L$, one can define the notion of surgery along $P$ by first replacing $P$ by a union of  $w$-arrows, called the \emph{expansion} of $P$ and denoted by $E(P)$, defined recursively by the local rule illustrated below, then performing surgery on  $E(P)$. The result will be denoted by $L_P$.
\[
  \textrm{\includegraphics[scale=0.9]{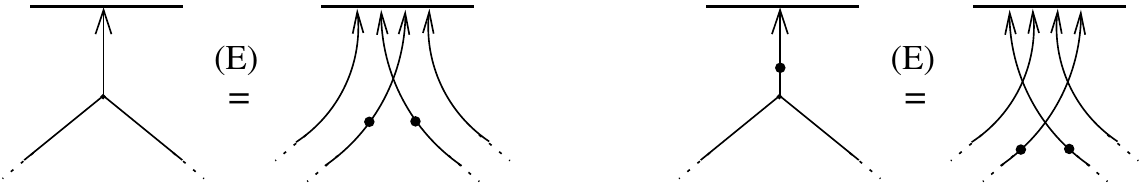}.}
  \]
\begin{rem}
The above figure suggests that the union $E(P)$ of $w$-arrows obtained
from $P$ by applying (E) recursively, has the shape of an
\quote{iterated commutator} of $w$-arrows. 
This observation is made rigourous and further discussed in Section \ref{sec:formalism}.
\end{rem}

\begin{defi}\label{def:wTree}
A \emph{$w$-tree presentation} $(\mathbf{1}, P)$ for $L$ is a union $P$ of $w$-trees for the trivial diagram $\mathbf{1}$, such that $L = \mathbf{1}_P$. 
Two arrow presentations $(\mathbf{1},P)$ and $(\mathbf{1},P')$ representing equivalent welded string links are called \emph{equivalent}. 
\end{defi}

The main point of this notion is that any welded string link admits a $w$-tree presentation \cite[Prop.~4.2]{arrow}. 
Moreover, in \cite[Thm.~5.21]{arrow}, a set of moves on $w$-trees is provided, which suffice to deform any $w$-tree presentation of $L$ into any other. 
These moves imply further operations, hence a full diagrammatic calculus called \emph{arrow calculus}, which can be used to study welded objects and their invariants. 
We do not reproduce all these moves here, but only provide below those that will be needed in this paper : 
\[ \textrm{\includegraphics[scale=0.8]{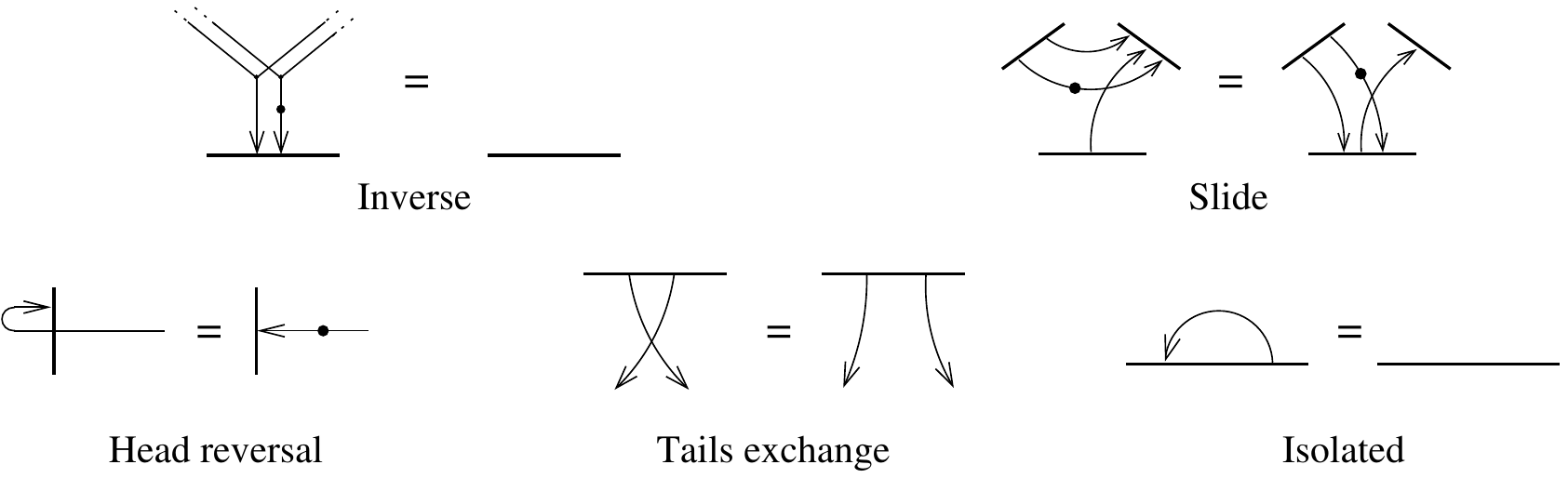}}\]
We refer the reader to \cite{arrow} for more details on arrow calculus.  

\medskip

We will also use the following, which refines the notion of
equivalence given in Definition \ref{def:wTree}.
\begin{defi}
Let $(\mathbf{1},P)$ be a $w$-tree presentation for $L$, and let $S$ be a subset of $P$. Denote by $\sigma\subset \1$ a neighborhood of the endpoints of $S$, which identifies with a trivial diagram, such that $\sigma$ is disjoint from $P\setminus S$. 
Let $S'$ be a union of $w$-trees for $\sigma$. 
We say that $S$ and $S'$ are \emph{locally equivalent} whenever $\sigma_{S'}$ is welded equivalent to $\sigma_S$. 
\end{defi}
Note that, in this setting, $(\mathbf{1},P)$ and $\big(\mathbf{1},(P\setminus S)\cup S' \big)$ are equivalent $w$-tree presentations. 

\subsection{Algebraic formalism for $w$-trees}\label{sec:formalism}
  
Let $(\1,P)$ be a $w$-tree presentation for a welded diagram. 

Let $A$ be a union of $w$-arrows in $P$ that are \emph{adjacent}, in
the sense that  \emph{all} heads in $A$ are met consecutively on a
portion $h_A$ of $\1$, called the \emph{support} of $A$, without
meeting any crossing or endpoint.\footnote{We shall also say in this situation that the heads of $A$ are \emph{adjacent}.}
The \emph{complement} of $A$ is then defined as the $w$-tree presentation $(\1,P\setminus A)$ obtained from $(\1,P)$ by removing all $w$-arrows in $A$. 
The support $h_A$ and the arrow heads of $P\setminus A$ then cut the strands of $\1$ into intervals that we label by letters $x_1,\ldots,x_p$; we set $F_{(P;A)}:=\langle x_1,\ldots,x_p\rangle$, the free group generated by these letters. 

\begin{figure}
  \[
\dessin{3cm}{Cut1}\ \leadsto\ \dessin{3cm}{Cut2}\ \leadsto\ \ w(A)=\overline
x_3x_5x_4\overline x_6
    \]
  \caption{An example of word associated to a set of adjacent
    $w$-arrows}
  \label{fig:Cut}
\end{figure}

Assuming that all arrow heads are met to the right side when traveling
along $h_A$ following its orientation (this is always possible thanks
to the Head reversal move),  each head of an arrow in $A$ can be
labeled\footnote{It should be noted that replacing arrows by
    labels corresponds actually to the \emph{cut-diagram} point of
    view on welded objects, introduced in \cite{AMY}. } by the letter
$x_i$ or $\ov{x}_i$, depending on whether the arrow contains an even or an odd number of twists, where $x_i$ is the label of the interval on which the tail lies. 
We can then define an element $w(A)\in F_{(P;A)}$  by reading the
labels in order when running along $h_A$ according to its orientation;
see Figure \ref{fig:Cut} for an example. 
Notice that this \emph{word} $w(A)$ remains unchanged under a Tails exchange move and that, conversely, it  determines the union of $w$-arrows $A$ up to Tails exchange moves.  
More generally, we have the following. 
\begin{lem}\label{lem:word}
Suppose that two $w$-tree presentations $(\1,P)$ and $(\1,P')$ only differ by replacing a union of adjacent arrows $A$ by another union of adjacent arrows $A'$ with same support. 
Then $F_{(P;A)}=F_{(P';A')}$, and $w(A)=w(A')$ if and only if $A$ and $A'$ differ by a sequence of Inverse moves and Tails exchange moves. 
\end{lem}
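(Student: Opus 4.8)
The plan is to prove Lemma~\ref{lem:word} by establishing the two asserted facts in turn: first that the free groups $F_{(P;A)}$ and $F_{(P';A')}$ literally coincide, and then the biconditional relating equality of the words $w(A)=w(A')$ to the existence of a sequence of Inverse and Tails exchange moves connecting $A$ to $A'$. For the first point, I would argue that the free group $F_{(P;A)}$ was defined purely from the combinatorial data of $\1$, the common support $h_A=h_{A'}$, and the heads of the \emph{complement} $P\setminus A=P'\setminus A'$. Since by hypothesis $(\1,P)$ and $(\1,P')$ differ only in the portion $A$ versus $A'$, and $A,A'$ share the same support, the induced cutting of the strands of $\1$ into intervals $x_1,\ldots,x_p$ is identical in both cases. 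Hence the generating sets coincide and $F_{(P;A)}=F_{(P';A')}$ as free groups on the same letters.

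For the biconditional, the backward direction is the easy one: I would recall that, by the discussion preceding the lemma, the word $w(A)$ is left unchanged by a Tails exchange move (since it only records which interval a tail lies on, not the internal ordering of tails), and I would check that it is equally unchanged by an Inverse move. An Inverse move reverses the orientation of an arrow while adding a twist, so it simultaneously flips the arrow's contribution from $x_i$ to $\ov{x}_i$ \emph{and} changes the parity of twists back, leaving the recorded label invariant; alternatively it may be set up so that the relevant reading in $h_A$ is unaffected. In either reading, if $A$ and $A'$ differ by a sequence of such moves, then $w(A)=w(A')$. The forward direction is the substantive content: assuming $w(A)=w(A')$, I must reconstruct $A$ from $A'$ using only Inverse and Tails exchange moves. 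Here I would invoke the remark already made in the text that $w(A)$ \emph{determines} the union $A$ of $w$-arrows up to Tails exchange moves. More precisely, the word $w(A)$ read along $h_A$ prescribes, head by head, both the interval on which each corresponding tail sits (the letter) and the twist parity (the bar); the only residual ambiguity is the order in which tails lying on a common interval are arranged along their strand, which is exactly the freedom afforded by Tails exchange moves, while adjusting an individual arrow's twist/orientation presentation is the freedom afforded by Inverse moves.

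The main obstacle I anticipate is making the forward direction fully rigorous, namely verifying that the \emph{only} ambiguity in passing from the word to the geometric union of arrows is captured by those two named moves, and nothing more. In particular I would need to confirm that two arrows sharing the same head position on $h_A$ and the same tail-interval label can be interchanged purely by Tails exchange moves, and that the choice of twist representation (even versus odd number of twists realizing a given $x_i$ or $\ov{x}_i$) is normalized by Inverse moves together with the twist-cancellation rule $\,\,\includegraphics[scale=1]{bulletsinyourhead.pdf}$. Once it is established that these moves act transitively on the fibers of the map $A\mapsto w(A)$, both implications follow. I would organize the proof so that the invariance claims (backward direction) are dispatched quickly by appeal to the definitions and the preceding paragraph, and then spend the bulk of the argument on the reconstruction step, reducing an arbitrary $A'$ with $w(A')=w(A)$ to $A$ by first matching twist parities via Inverse moves and then matching tail orders via Tails exchange moves.
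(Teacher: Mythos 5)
Your first claim ($F_{(P;A)}=F_{(P';A')}$) and the overall shape of the argument (the fibers of $A\mapsto w(A)$ are the orbits of the two moves) match the paper, but there is a genuine gap in the forward direction, rooted in a mischaracterization of the Inverse move. In this paper the Inverse move does not ``reverse the orientation of an arrow while adding a twist''; it inserts or deletes a \emph{pair} of parallel $w$-arrows that differ by a single twist (see its use in the proof of Theorem~\ref{th:nonobvious} and in operation (a) of Section~\ref{sec:proof13}). At the level of words, this move is precisely the insertion or deletion of a subword $x_j\ov{x}_j$ or $\ov{x}_j x_j$. The twist-normalization you attribute to the Inverse move is already handled by the $\bullet\bullet$-cancellation rule built into the definition of $w$-trees, so under your reading the Inverse move does essentially nothing, and your reconstruction step cannot succeed.

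Concretely, $w(A)=w(A')$ is an equality of \emph{elements of the free group} $F_{(P;A)}$, not of literal strings: $A'$ may contain, say, two extra adjacent arrows contributing a cancelling pair $x_j\ov{x}_j$ that $A$ lacks, so $A$ and $A'$ need not even have the same number of arrows. Your plan of ``first matching twist parities via Inverse moves and then matching tail orders via Tails exchange moves'' never changes the number of arrows and therefore cannot bridge this case. The paper's proof closes exactly this gap in one line: since $F_{(P;A)}$ is free, $w(A)=w(A')$ iff the two strings differ by insertions/deletions of $x_j\ov{x}_j$ or $\ov{x}_jx_j$, and these correspond bijectively to Inverse moves on pairs of $w$-arrows, while the residual reordering ambiguity is absorbed by Tails exchange moves (as you correctly note). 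With the Inverse move correctly identified, your argument becomes the paper's; as written, the forward implication is not established.
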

\begin{proof}
 First note that $A$ and $A'$ have same complement, hence we indeed have that $F_{(P;A)}=F_{(P';A')}$. 
 Since this is a free group, we have $w(A)=w(A')$ if and only if these two words differ by inserting or deleting copies of $x_j \ov{x}_j$ or $\ov{x}_j x_j$ for any $j$. But this is equivalent to saying that $A$ and $A'$ differ by a sequence of Inverse moves (with pairs of $w$-arrows)  and Tails exchange moves.  
 \end{proof}
 
Lemma \ref{lem:word} provides a one-to-one correspondence between sets of adjacent arrows with a fixed support and complement, up to equivalence, and elements in the associated free group. 
Under this correspondence, our conventions for the commutator notation
$[x,y]$ and the conjugation notation $x^y$, given in Section \ref{sec:LCS}, have natural diagrammatic counterparts. 

It is easily observed that a commutator corresponds to the expansion of a $w$--tree:
\[
x\overline y\overline x y=[x,y]\quad \leftrightarrow\quad  \dessin{1.5cm}{com1}\, =\, \dessin{1.5cm}{com0}.
  \]
In general, to a $w$-tree $T$ which is part of a $w$-tree presentation $(\1,P)$, corresponds a word $w(T)\in F_{\left(P;E(T) \right)}$, which is defined as the word corresponding to its expansion; in other words, we set $w(T)=w \big(E(T) \big)$. 
From the definition, the word $w(T)$ can be directly read from $T$ using the following procedure.  
Label each edge of $T$ containing a tail by the generator $x_i$ at the tail, then  
label the remaining edges of $T$ by recursively applying the local
rules of Figure \ref{fig:w(T)}; the label at the edge containing the
head is $w(T)$; see Example \ref{ex:conjugate}~(i).
\begin{figure}
\begin{center}
   \includegraphics[scale=0.8]{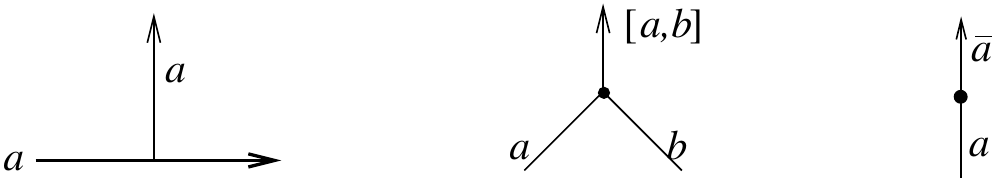}
  \caption{Procedure to compute $w(T)$ from a $w$-tree}\label{fig:w(T)}
\end{center}
\end{figure}

Note that, under this correspondence, the word associated to a linear $w$-tree (Definition \ref{def:lineartree}), is a linear commutator with entries in $\{x_i,\ov{x}_i\}_i$, in the sense of Definition \ref{def:linear}.

The conjugation notation, on the other hand, corresponds to a situation where the Slide move can be performed:
\[
\overline y x y=x^y\quad \leftrightarrow\quad \dessin{1.5cm}{com2}\, =\, \dessin{1.5cm}{com3},
  \]
Since two $w$-tree presentations that differ by a Slide move are locally equivalent, the above rightmost picture can be seen as the diagrammatic counterpart of notation $x^y$ in our correspondence.

\subsubsection{Conjugated trees}

Combining these two observations, we can thus extend our correspondence to a wider range of $w$-tree presentations: 
\begin{defi}\label{def:conjugate}
A $w$-tree $T$ for $\1$ is called \emph{conjugated} if there exists a union $U$ of \emph{pairs of conjugating $w$-arrows} for $T$.  Here, a pair of conjugating $w$-arrows for $T$ is a union of two parallel $w$-arrows that only differ by a twist, and whose heads are on the same component of $\1$ and only separated by a tail of $T$, and possibly other nested pairs of conjugating $w$-arrows. See Example \ref{ex:conjugate}. 
\end{defi}

Let $T$ be a conjugated $w$-tree $T$ with union $U$ of conjugating $w$-arrows, in a $w$-tree presentation $(\1,P)$.
One can define a corresponding word $w^U(T)\in F_{(P\setminus U;E(T))}$,
which is the word associated with the union of adjacent $w$-arrows
obtained by taking the expansion of $T$ and applying Inverse moves and Slide moves\footnote{Since each tail of $T$ yields a \emph{union}
    of $w$-arrows after expansion, one first need to use the Inverse
    move several times between these tails to be able to perform the Slide moves.} to $w$-arrows in $U$.
This word $w^U(T)$ can be directly read off $T\cup U$, by substituting each letter $a$ in $w(T)\in F_{(P\setminus U;E(T))}$ by its conjugate $a^{w(X)}$ whenever the corresponding tail of $T$ admits a union of conjugating $w$-arrows $X\cup \ov{X}\subset U$, where $X$ denotes the union of $w$-arrows met before the tail according to the orientation (and $\ov{X}$ are the $w$-arrows met after the tail). 
Let us illustrate this concretely on an example:  

\begin{exemple}\label{ex:conjugate}
 The $w_4$-tree $T$ shown below is a conjugated tree (here, the labels $a$, $b$, $c$, $d$ and $e$ are not necessarily mutually distinct).   
 \[
   \textrm{\includegraphics[scale=0.9]{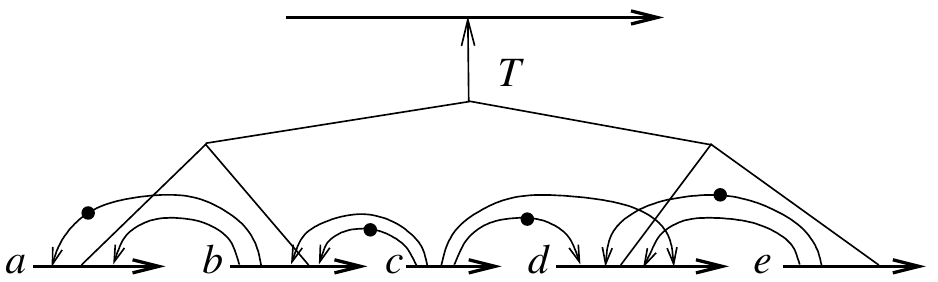}}
   \]
 
 (i)~ Ignoring all conjugating $w$-arrow, we have that $w(T)=\big[[a,b],[d,e]\big]$. 
 
 (ii)~ Denoting by $U$ the union of conjugating $w$-arrows for $T$, we have 
 \[
   w^U(T) = \big[[a^{b},b^{\ov{c}}],[d^{e c},e]\big].
   \]
\end{exemple}

\begin{rem}
  We stress that, for a $w_k$--tree $T$, $w^U(T)$ is a 
  length $k$ commutator whose entries are conjugates of $\big\{x_i,\ov{x}_i\big\}_i$. 
Conversely, it follows from the above that for any length $k$ commutator $w\in F_n$  whose entries are conjugates of $\{x_i,\ov{x}_i\}_i$, there exists a $w_k$-tree $T$ and a union $U$ of conjugating arrows such that $w=w^U(T)$.
This applies more generaly to \emph{products} of such commutators, which then correspond to \emph{adjacent} conjugated trees, \emph{i.e.} conjugated trees with adjacent heads. 
\end{rem}

\begin{rem}\label{rem:conjugated}
 Deleting a conjugated tree $T$, in the notation of Definition \ref{def:conjugate}, yields the union $U$ of conjugating $w$-arrows, which can in turn all be deleted by using the Inverse move.  
 \end{rem}

For this paper, we will need the following, which is a direct
translation of Lemma \ref{lem:2.1} via the above correspondence: 
\begin{lem}\label{cor:2.1}
Let $T$ be a $w_l$-tree in some union of $w$-trees for $\1$, 
such that the $i$th component of $\1$ contains $k$ tails of $T$ ($k\le l$). 
Then $T$ is locally equivalent to a union of adjacent conjugated linear $w_k$-trees, with all tails on the $i$th component.  
\end{lem}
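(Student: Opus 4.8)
The plan is to prove this lemma by transporting the purely algebraic statement of Lemma~\ref{lem:2.1} through the dictionary between $w$-trees and free-group commutators developed in Section~\ref{sec:formalism}. By the definition of local equivalence it suffices to work inside a neighborhood $\sigma$ of the endpoints of $T$ that is disjoint from the other $w$-trees and inside which $\1$ restricts to a trivial diagram; there the free group $F_{(P;E(T))}$ recording the surgery data is generated by the intervals that the head support $h_T$ cuts out of $\sigma$, so that each component of $\sigma$ contributes one generator, and two if it carries $h_T$. In this free group the expansion of $T$ is encoded by the length $l$ commutator $w(T)$ (or $w^U(T)$, if $T$ already carries conjugating arrows), whose entries are the interval labels, or their conjugates, sitting at the tails of $T$.

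First I would translate the hypothesis. The $k$ tails of $T$ lying on the $i$th component correspond exactly to $k$ entries of $w(T)$ that are (conjugates of) the one or two labels of intervals on the $i$th component. Writing $N$ for the normal subgroup of $F_{(P;E(T))}$ generated by those labels, this says that $w(T)$ is a length $l$ commutator with $k$ entries in $N$. I would then invoke Lemma~\ref{lem:2.1} with $N$ in the role of $N_i$: its proof relies only on the elementary commutator identities recalled in Section~\ref{sec:LCS} and never uses that $N_i$ is generated by a single element, so it applies verbatim and rewrites $w(T)$ as a product of length $k$ linear commutators each of whose entries is a single conjugate of a label on the $i$th component.

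To conclude I would read this identity back diagrammatically. By the converse direction of the remark following Example~\ref{ex:conjugate}, a product of length $k$ linear commutators whose entries are conjugates of $i$th-component labels is realized by a union of adjacent conjugated linear $w_k$-trees all of whose tails lie on the $i$th component. This union carries, in the common free group $F_{(P;E(T))}$, the same word as $T$; hence the faithfulness of the dictionary --- Lemma~\ref{lem:word} for $w$-arrows, extended to conjugated trees by the Inverse and Slide moves built into the definition of $w^U$ --- gives that $T$ is locally equivalent to it, which is precisely the assertion.

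The step I expect to be the genuine obstacle is this last translation: making rigorous that equality of the associated words really does produce a bona fide sequence of local moves relating $T$ to the output union, and in particular arranging both sides to share the same support and complement so that Lemma~\ref{lem:word} applies. By contrast the algebraic core, Lemma~\ref{lem:2.1}, is already in hand, and the only mild point there --- that $N$ may be normally generated by two labels rather than one --- costs nothing, since the argument of Lemma~\ref{lem:2.1} is insensitive to the number of generators.
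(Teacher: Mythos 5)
Your proposal is correct and follows essentially the same route as the paper, which gives no separate argument for this lemma but simply declares it ``a direct translation of Lemma~\ref{lem:2.1} via the above correspondence'' --- exactly the dictionary you invoke. Your added observations (that the relevant normal subgroup may be generated by several interval labels on the $i$th component, and that the delicate point is realizing the word identity by local moves via Lemma~\ref{lem:word} and the Inverse/Slide moves) are sound refinements of what the paper leaves implicit.
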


\subsubsection{Relation to the welded group}\label{sec:weldedgroup}

Finally, let us recall how, given a $w$-tree presentation $(\mathbf{1},P)$ for $L$, one can associate a presentation for the group $G(L)$ defined in Section \ref{sec:milnor}, using our algebraic formalism. 

Again, by the Head reversal move, one can freely assume that all heads of $P$ are attached to the right side of $\1$ according to the orientation.  
The heads of $P$ split $\mathbf{1}$ into a union of arcs, each of which yields a generator $x_i$, and we denote by $F_P$ the free group generated by $\big\{x_i \big\}_i$. 

Now, let $T$ be a single $w_k$-tree in $P$, and denote by $a$ and $b$
the two generators of $F_P$ associated with the arcs to the left and right of its head, respectively. 
Following the above, a word $w(T)$ is defined by expanding $T$ into a union $E(T)$  of adjacent $w$-arrows, and writing the associated word.  
This $w(T)$ naturally lives in $F_P$, since the complement of $h_{E(T)}$ is obtained by just deleting a neighborhood of the head of $T$. 

Figure \ref{fig:wirtinger2} then illustrates how $T$ yields a conjugation relation $R_T$:$~b=\ov{w(T)}aw(T)$ among the two generators $a,b$ separated by its head. 
\begin{figure}[!h]
\begin{center}
   \includegraphics[scale=0.9]{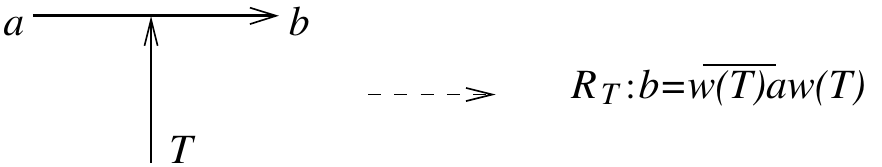}
  \caption{Conjugating relation $R_T$ associated with $T$}\label{fig:wirtinger2}
\end{center}
\end{figure}

In fact, we obtain in this way the following presentation (see \cite[\S~6.1.1]{arrow}): 
\[
  G(L) = \langle \{x_i\}_i \,\vert \, \{R_T\}_{T\in P} \rangle.
  \]
In particular, the $i$th preferred longitude of $L$ from Definition \ref{def:longitude}, can be written as $\lambda_i(L)=\alpha_i^{s_i}\prod_{T\in P_i} w(T)$, for some $s_i\in \mathbb{Z}$, where $P_i$ is the subset of $w$-trees in $P$ whose heads are on the $i$th component of $\1$, ordered according to their occurence on the $i$th component when following the orientation. 

\begin{rem}\label{rem:conj1}
 Observe that, in the notation of Figure \ref{fig:wirtinger2}, if the head of $T$ is on the $i$th component then $a,b\in N_i$, and if moreover $w(T)\in \Gamma_k N_i$ for some $k$, then we have: $b=\ov{w(T)}aw(T) = a \big[\ov{a},w(T) \big]\equiv a \textrm{ mod $\Gamma_{k+1} N_i$}$.
\end{rem}

\subsubsection{Self $w_k$-equivalence and $w^{(k+1)}$-equivalence}\label{sec:wk}

This section is concerned with the following families of equivalence relations for welded objects. 
\begin{defi}\label{def:selfwk}
Let $k\ge 1$. The \emph{$w_k$-equivalence}, resp. \emph{self $w_k$-equivalence}, is the equivalence relation generated by welded equivalence and surgery along $w$-trees, resp. self $w$-trees, of degree $\ge k$. 
Here, a \emph{self $w$-tree} is a $w$-tree whose endpoints all lie on a same component. 
The \emph{$w^{(k+1)}$-equivalence} is the equivalence relation generated by welded equivalence and surgery along $w$-trees having at least $k+1$ endpoints on a same component. 
\end{defi}

\begin{rem}\label{rem:repeat}
Given a $w_{kn+1}$-tree $T$ for some $n$-component string link, there
is necessarily some index $i$ such that $T$ 
has at least $k+1$ ends on the $i$th component. 
This elementary observation shows that  the $w_{kn+1}$-equivalence
implies the $w^{(k+1)}$-equivalence. 
\end{rem}

The following will play a central role in proving our main theorem, but might also be of independent interest in the study of arrow calculus.  
\begin{theo}\label{th:nonobvious}
 Two welded string links are self $w_k$-equivalent, if and only if they are $w^{(k+1)}$-equivalent.
\end{theo}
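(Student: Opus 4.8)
The plan is to prove the equivalence in two directions, where one direction is immediate and the other is the substantive one.

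First, I would establish the easy implication: self $w_k$-equivalence implies $w^{(k+1)}$-equivalence. This direction is essentially a counting argument. Given a self $w$-tree $T$ of degree $\geq k$, all of its $\geq 2k$ endpoints lie on a single component $L_i$. In particular, at least $k$ of these endpoints are tails (since a $w$-tree has exactly one head), and together with the head this gives at least $k+1$ endpoints on the $i$th component. Hence surgery along $T$ is already an instance of a move generating $w^{(k+1)}$-equivalence, and the implication follows directly from the definitions.

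The main work lies in the converse: $w^{(k+1)}$-equivalence implies self $w_k$-equivalence. The strategy is to show that any surgery along a $w$-tree $T$ with at least $k+1$ endpoints on some component $L_i$ can be realized, up to self $w_k$-equivalence, by surgery along \emph{self} $w$-trees of degree $\geq k$. The key algebraic input is Lemma \ref{cor:2.1} (the diagrammatic translation of Lemma \ref{lem:2.1}): a $w_l$-tree $T$ with $k$ tails on the $i$th component is locally equivalent to a union of adjacent conjugated linear $w_k$-trees, with \emph{all} tails on the $i$th component. The idea is to exploit the fact that $T$ has $k+1$ endpoints on $L_i$, one of which may be the head. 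I would first arrange, using the arrow-calculus moves and the Head reversal move, that the head of $T$ also lies on $L_i$, so that $T$ has at least $k+1$ univalent vertices on $L_i$; after possibly isolating the head via a conjugation (cf.\ Remark \ref{rem:conjugated} and the Slide/Inverse moves), this reduces to controlling a tree whose tails concentrate on $L_i$.

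The crux of the argument is then to convert a tree with $k+1$ endpoints on a single component into genuine \emph{self} trees of degree $\geq k$. Applying Lemma \ref{cor:2.1} to the sub-configuration of tails lying on $L_i$, one rewrites $T$ (locally, hence up to welded equivalence of the surgered diagrams) as a product of conjugated linear $w_k$-trees all of whose tails sit on $L_i$. The remaining issue is that the \emph{head} and the conjugating $w$-arrows of these linear trees may still touch other components, so such a tree need not literally be a self $w$-tree. To handle this I expect to use an inductive \emph{push-off} or commutator-expansion argument on the number of endpoints lying off $L_i$: each time a conjugating arrow or the head meets a distinct component $L_j$, one splits the contribution using the commutator identities (C3)--(C5) translated diagrammatically, producing trees of strictly larger degree (hence harmless modulo self $w_k$-equivalence when they remain self trees) while reducing the off-component complexity. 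The main obstacle will be precisely this bookkeeping: ensuring that the trees produced at each stage are either self $w$-trees of degree $\geq k$, or of high enough degree to be absorbed, and that the inductive measure genuinely decreases. This is where the interplay between the degree (controlled by Lemma \ref{cor:2.1}) and the component-support of the conjugating arrows must be tracked carefully, and I would carry out the induction on the total number of univalent vertices not lying on $L_i$, with Lemma \ref{cor:2.1} supplying the base case where everything is concentrated on a single component.
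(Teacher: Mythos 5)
Your easy direction is correct in substance (one slip: a degree-$d$ $w$-tree has $2d$ \emph{vertices} but only $d+1$ \emph{endpoints}, namely $d$ tails and one head; the count you actually use, at least $k+1$ endpoints on one component, is the right one). The first case of your converse, where the head of $T$ already lies on $L_i$, also works as in the paper: Lemma \ref{cor:2.1} produces adjacent conjugated linear self $w_k$-trees on $L_i$, and the conjugating $w$-arrows are not an obstruction at all, wherever their tails lie --- once the conjugated tree itself is deleted by a self $w_k$-move, the conjugating arrows occur in cancelling pairs and disappear by Inverse moves (Remark \ref{rem:conjugated}). That part of your worry is a red herring.

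The genuine gap is the case where the head of $T$ lies on a component $L_j$ with $j\neq i$. You propose to \quote{arrange that the head of $T$ also lies on $L_i$} via the Head reversal move, but that move only changes the side of the strand to which the head attaches; no move of arrow calculus transports a head to a different component, so this case cannot be reduced to the previous one. Your fallback --- an induction on the number of univalent vertices off $L_i$ driven by (C3)--(C5) --- does not close the gap either: those expansions never move the head off $L_j$, so the trees produced are never self trees and your measure does not decrease at the head; and discarding higher-degree non-self trees is circular, since surgeries along such trees are precisely the moves you are trying to eliminate. The missing ingredient is the following trick. When the head is on $L_j$, \emph{all} $k+1$ endpoints of $T$ on $L_i$ are tails, so Lemma \ref{cor:2.1} yields conjugated \emph{linear $w_{k+1}$-trees} (degree $k+1$, one more than in your sketch) with all tails on $L_i$ and head on $L_j$. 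Writing the word of such a tree as $[a,b]=a\,\ov{a}^{\,b}$ (identity (\ref{eq:com0})), realized diagrammatically by one expansion (E) followed by Slide moves, converts it into two parallel $w$-arrows from $L_i$ to $L_j$ together with a conjugating pair of $w_k$-trees whose heads surround the \emph{tail} of one of those arrows --- hence lie on $L_i$, with all their tails on $L_i$ as well. These are honest self $w_k$-trees; deleting them is a self $w_k$-equivalence, after which the two arrows cancel by an Inverse move. Without this step, or an equivalent mechanism for relocating the degree-$k$ part onto $L_i$, your induction has no reason to terminate.
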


\begin{proof}
 Since a self $w$-tree of degree $\ge k$ has at least $k+1$ endpoints, which are all attached to a same component, we clearly have the \quote{only if} part of the statement. 

In order to prove the \quote{if} part, consider a $w$-tree $T'$ for an
$n$-component welded string link $L$, with $k+1$ endpoints on the
$i$th component $L_i$ of $L$.  We distinguish two cases.
If the head of $T$ is on $L_i$, then by Lemma \ref{cor:2.1}, it is locally equivalent to a union of conjugated self $w_k$-trees on the $i$th component. 
Hence in this case, $L_T$ is clearly self $w_k$-equivalent to $L$ by Remark \ref{rem:conjugated}. 
Now, if the head of $T$ is on the $j$th component $L_j$ of $L$ for some $j\neq i$, then Lemma \ref{cor:2.1} more precisely tells us that $T$ is locally equivalent to a union of conjugated linear $w_{k+1}$-trees, with all tails on the $i$th component and with head on the $j$th component. 
Consider one such linear $w$-tree, as on the left-hand side of the
figure below: 
\[ \includegraphics[scale=0.69]{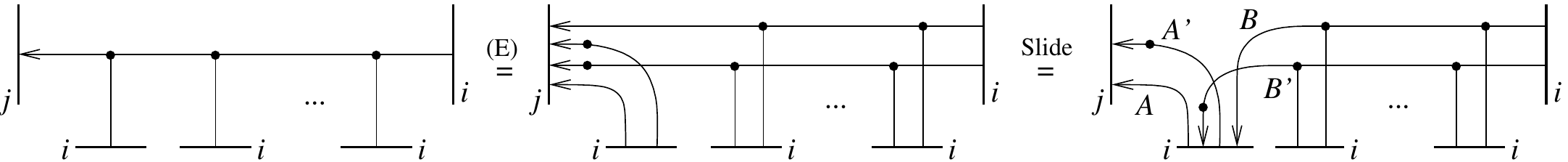} \]
This figure shows how applying the expansion move (E) to such a tree, followed by the Slide move,  yields a union of two $w$-arrows $A\cup A'$ and two self $w_{k}$-trees $B\cup B'$ on the $i$th component. Deleting $B\cup B'$  up to self $w_k$-equivalence, then deleting $A\cup A'$ by the Inverse move, yields the empty diagram. 
This observation, together with Remark \ref{rem:conjugated}, 
shows that $L_T$ is self $w_k$-equivalent to $L$. 
\end{proof}

\begin{rem}
  The argument of this proof can be used to show that the (self) $w_k$-equivalence is generated by welded equivalence and surgery along (self) $w_k$-trees, rather than (self) $w$-trees of degree $\ge k$. 
\end{rem}

\subsection{Self $w_k$-concordance and $w^{(k+1)}$-concordance} 
\label{sec:proof13}

The main purpose of this section is Theorem \ref{th:23}, which implies the equivalence   (1)$\Leftrightarrow$(3) in our main theorem. 
First, we introduce the self $w_k$-concordance relation, along with the $w^{(k+1)}$-concordance equivalence.  
\medskip 

The notion of concordance for welded links was introduced and studied in \cite{BC,Gaudreau}.

Two $n$-component welded string links $L$ and $L'$ are \emph{welded concordant} if one can be obtained from the other by a sequence of welded equivalence and the  birth/death and saddle moves of Figure \ref{fig:concmoves}, such that, for each $i\in\{1,\cdots,n\}$, the number of birth/death moves used to deform the $i$th component of $L$ into the $i$th component of $L'$ is equal to the number of saddle moves. 
\begin{figure}[!h]
 \includegraphics[scale=0.9]{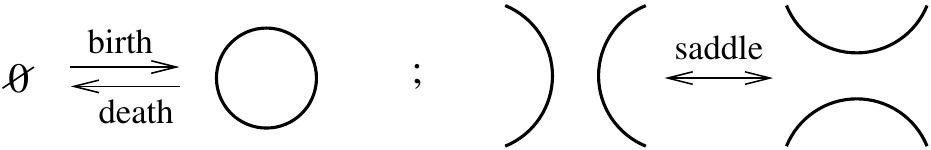}
 \caption{The birth/death and saddle moves}\label{fig:concmoves}
\end{figure}

\begin{defi}\label{def:selfwkconc}
Let $k\ge 1$. The  \emph{$w_k$-concordance}, resp.  \emph{self $w_k$-concordance}, is the equivalence relation generated by welded concordance and $w_k$-equivalence, resp. self $w_k$-equivalence.  
The \emph{$w^{(k+1)}$-concordance} is the equivalence relation generated by $w^{(k+1)}$-equivalence and welded concordance. 
\end{defi}

The following is the main result of this section. 
\begin{theo}\label{th:23}
 Let $L$ and $L'$ be two $n$-component welded string links. The following are equivalent: 
 \begin{enumerate}
  \item[(i)] $\mu_L(I)=\mu_{L'}(I)$ for any sequence $I$ with $r(I)\le k$. 
  \item[(ii)] $L$ and $L'$ are $w^{(k+1)}$-concordant. 
  \item[(iii)] $L$ and $L'$ are self $w_k$-concordant. 
\end{enumerate}
\end{theo}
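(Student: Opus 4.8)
The plan is to establish the cycle of implications (i)$\Rightarrow$(ii)$\Rightarrow$(iii)$\Rightarrow$(i), leveraging the structural result Theorem \ref{th:nonobvious} that already equates self $w_k$-equivalence with $w^{(k+1)}$-equivalence. The implication (iii)$\Rightarrow$(ii) is the easy direction: since self $w_k$-concordance is generated by welded concordance and self $w_k$-equivalence, and $w^{(k+1)}$-concordance is generated by welded concordance and $w^{(k+1)}$-equivalence, Theorem \ref{th:nonobvious} gives that these two generating sets coincide, so the two concordance relations are literally the same equivalence relation. Thus (ii)$\Leftrightarrow$(iii) is immediate, and the real content reduces to relating these diagrammatic notions to the Milnor data.

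For (ii)$\Rightarrow$(i), I would show that Milnor invariants $\mu(I)$ with $r(I)\le k$ are invariant under both generating moves of $w^{(k+1)}$-concordance. Welded concordance invariance of Milnor invariants should follow from the established behaviour of the longitudes $\lambda_i$ in the $k$-reduced quotient under birth/death and saddle moves, in the spirit of the concordance invariance results of \cite{BC,Gaudreau}. For surgery along a $w$-tree $T$ with at least $k+1$ endpoints on a single component: the key observation is that such a surgery modifies the longitudes by inserting the word $w(T)$ (or a conjugate thereof) associated to $T$ via the formalism of Section \ref{sec:formalism}. By Lemma \ref{lem:2.1} (equivalently its diagrammatic translation Lemma \ref{cor:2.1}), a $w_l$-tree with $k+1$ tails on component $i$ contributes an element of $\Gamma_{k+1}N_i$, which lies in $J^k$. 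I would argue that this insertion does not change the class of the longitude modulo $J_i^k$, so by the equivalence (i)$\Leftrightarrow$(ii) of Theorem \ref{thm:12} the Milnor invariants with $r(I)\le k$ are preserved.

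The hard part will be (i)$\Rightarrow$(iii) — the realization direction — since this requires producing an explicit sequence of moves rather than merely checking invariance. Here the strategy is to reduce to the algebraic statement of Theorem \ref{thm:12}: condition (i) is equivalent to $\varphi_L^{(k)}=\varphi_{L'}^{(k)}$, equivalently to the longitudes agreeing modulo $J_i^k$. I would take $w$-tree presentations $(\mathbf{1},P)$ and $(\mathbf{1},P')$ for $L$ and $L'$, and seek to transform one into the other, up to welded concordance and $w^{(k+1)}$-equivalence, so that the remaining difference is detected exactly by the Milnor data. The natural tool is to use the correspondence between products of (conjugated) commutators and adjacent conjugated $w$-trees recorded after Example \ref{ex:conjugate}: since the difference of longitudes lies in $J_i^k=\Gamma_{k+1}N_1\cdots\Gamma_k N_i\cdots\Gamma_{k+1}N_j$, one can realize the discrepancy by $w$-trees whose entries force at least $k+1$ endpoints on some component, hence surgeries that are trivial up to $w^{(k+1)}$-equivalence.

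I would expect the main obstacle to lie in controlling the interaction between the concordance moves and the tree surgeries simultaneously — that is, in showing that after using $w^{(k+1)}$-equivalence to cancel the "higher" commutator contributions, whatever remains can genuinely be undone by welded concordance rather than leaving residual Milnor data. Concretely, one must verify that the longitudes agreeing modulo $J_i^k$ is not merely necessary but sufficient to cancel all $w$-trees up to the two relations; this likely requires a careful induction on tree degree, peeling off trees of increasing complexity and repeatedly invoking Theorem \ref{thm:12} together with Lemma \ref{cor:2.1}. The case analysis according to whether a tree's head lies on the same component as its excess tails (as in the proof of Theorem \ref{th:nonobvious}) will presumably reappear here and constitute the technical heart of the argument.
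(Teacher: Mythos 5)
Your skeleton matches the paper's: (ii)$\Leftrightarrow$(iii) is immediate from Theorem \ref{th:nonobvious}, one direction is an invariance check, and the other is a realization argument routed through Theorem \ref{thm:12}. The invariance direction is essentially fine (the paper checks it for self $w_k$-trees rather than $w^{(k+1)}$-trees, which is slightly cleaner: for a self $w_k$-tree on component $i_0$ one has $w(T)\in\Gamma_k N_{i_0}$, and Remark \ref{rem:conj1} handles the effect on the other longitudes; in your version you should also treat the case where the head of $T$ sits on the component carrying the excess endpoints, since then $w(T)$ lies only in $\Gamma_k N_i\subset J_i^k$, not in $J^k$, and one must note that the insertion lands precisely in $\lambda_i$).

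The genuine gap is in the realization direction. Your plan -- ``peel off trees by induction on degree, invoking Theorem \ref{thm:12} and Lemma \ref{cor:2.1}'' -- has no mechanism for aligning two arbitrary $w$-tree presentations: Lemma \ref{lem:word} only identifies presentations whose arrows are \emph{adjacent} with fixed support, and for a general presentation the longitude does not determine the tree configuration. The missing idea is the reduction to \emph{sorted} presentations (Definition \ref{def:ascending}): the paper first shows that any welded string link is $w^{(k+1)}$-concordant to one admitting a sorted presentation, via \cite[Cor.~2.5]{Colombari} together with Remark \ref{rem:repeat}, and \emph{this is the only place where welded concordance is used} -- which is precisely why concordance, and not just $w^{(k+1)}$-equivalence, appears in the statement. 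Once both presentations are sorted, all tails sit on the initial arcs, the associated words are exactly the preferred longitudes (after an Isolated-move adjustment), and Lemma \ref{lem:2.2} converts the congruence $\lambda^k_i(L)\equiv\lambda^k_i(L')$ mod $J_i^k$ into three explicit word operations, each realized by inserting or deleting sorted trees with enough tails on a single component, hence trivial up to $w^{(k+1)}$-equivalence. Your proposal neither performs this normalization nor identifies where concordance enters, so as written the realization step does not close.
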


The rest of this section is devoted to the proof of Theorem \ref{th:23}.  
\medskip

We already have directly that (ii) and (iii) are equivalent, by Theorem \ref{th:nonobvious}. 
In the rest of the proof, we use the fact from Theorem \ref{thm:12} that (i) is equivalent to saying that the $k$--reduced $i$th
longitudes $\lambda^k_i(L)$ and $\lambda^k_i(L')$ are congruent modulo $J_i^k=\Gamma_{k+1}N_1\cdots \Gamma_{k}N_i\cdots \Gamma_{k+1}N_n$ for all $i$. 
\medskip

Let us prove that (iii) implies (i). 
It is shown in \cite{AMY} that welded Milnor invariants are invariant under welded concordance. 
So it suffices to show that, if $L'$ is obtained from $L$ by surgery along a self $w_k$-tree $T$, we have $\lambda^k_i(L')\equiv \lambda^k_i(L)\mod J_i^k$ for all $i$. 
Suppose that all endpoints of $T$ are on the $i_0$th component of $L$ for some $i_0$. 
Then  $w(T)\in \Gamma_kN_{i_0}$, and by Remark \ref{rem:conj1} we have
directly that $\lambda^k_i(L)\equiv\lambda^k_i(L)\mod
\Gamma_{k+1}N_{i_0}(\subset J_i^k)$ 
for all $i\neq i_0$.
Furthermore, there exists some words $a,b$ in $F_n$,  such that
$\lambda^k_{i_0}(L)=ab$ and, again by Remark \ref{rem:conj1},
$\lambda^k_{i_0}(L')\equiv a w(T) b \mod
\Gamma_{k+1}N_{i_0}$. This  implies that $\lambda^k_{i_0}(L')\equiv \lambda^k_{i_0}(L)\mod J_{i_0}^k$, as desired. 
\medskip

Finally, we prove that (i) implies (ii). 
By assumption, and using Lemmas \ref{lem:izo} and \ref{lem:2.2}, 
the $k$--reduced $i$th preferred longitudes $\lambda^k_i(L')$ and $\lambda^k_i(L)$ differ by a finite sequence of the following operations: 
\begin{itemize}
 \item[(a)] inserting or deleting copies of $\alpha_j \ov{\alpha}_j$ or $\ov{\alpha}_j \alpha_j$ for any $j$;
 \item[(b)] inserting or deleting 
   length $\geq k+1$ commutators with entries in $\big\{\alpha_l;\ov{\alpha}_l\big\}_l$, and with at least $k+1$ entries with  some index $j\neq i$.
 \item[(c)] inserting or deleting 
   length $\geq k$ commutators with entries in $\big\{\alpha_l;\ov{\alpha}_l\big\}_l$, and with at least $k$ entries with index $i$.
\end{itemize}
We aim at showing that, under this assumption, $L$ and $L'$ are $w^{(k+1)}$-equivalent. 

As a first step, let us assume that both $L$ and $L'$ are given by a so-called sorted $w$-tree presentation:
\begin{defi}\label{def:ascending}
A $w$-tree presentation for an $n$-component welded string link is \emph{sorted}\footnote{This notion was first defined in \cite{ABMW} under the term \emph{ascending}, in the context of Gauss diagrams. 
} if, when running along each component following the orientation, all tails are met before all heads.  
\end{defi}

Given a sorted presentation $(\mathbf{1},P)$, the tails of all
$w_k$-trees are contained in the initial arcs of $\mathbf{1}$, hence
the words associated to these $w_k$-trees are 
length $k$ commutator with entries in $\big\{\alpha_l;\ov{\alpha}_l\big\}_l$.  
Denote by $P_i$ the subset of $w$-trees in $P$ whose heads are on the $i$th component. 
According to Section \ref{sec:weldedgroup}, there exists some $s_i\in \mathbb{Z}$ such that the $i$th preferred longitude is equal to $\alpha_i^{s_i}\prod_{T\in P_i} w(T)$. 
Using the Isolated move, we can introduce a union $A_i$ of $w$-arrows whose endpoints are all on the $i$th component, such that $(\mathbf{1},P\cup A_i)$ is sorted 
and equivalent to $(\mathbf{1},P)$, and such that the $i$th preferred longitude is equal to $\prod_{T\in A_i\cup P_i} w(T)$. 
By this observation, we can freely assume that the words associated with our sorted presentations, are precisely the preferred longitudes of our string links $L$ and $L'$. 
Hence by Lemma \ref{lem:word}, if $\lambda_i(L)$ and $ \lambda_i(L')$ are equal as words in $\alpha_1,\cdots,\alpha_n$, then applying (E) recursively to these sorted presentations, yields the \emph{same} union of $w$-arrows. 

It thus remains to realize the above three operations (a), (b) and (c) by a $w^{(k+1)}$-equivalence among sorted tree presentations. This is done as follows: 
\begin{itemize}
 \item Operation (a) is simply achieved by Inverse moves, which 
 insert a pair of parallel $w$-arrows, one having a $\bullet$, running from the bottom of the $j$th component and ending on the $i$th component. 
 \item
Operation (b), resp. (c), is achieved by inserting or deleting sorted  
$w$-trees, with head on the $i$th component, and with at least $k+1$ tails on the $j$th component, resp. with at least $k$ tails on the $i$th component. 
\end{itemize}
This shows that $L$ and $L'$ are $w^{(k+1)}$-equivalent, and we are done in the sorted case. 

In order to conclude the proof, it is now enough to show that any welded string link is $w^{(k+1)}$-concordant to a string link with a sorted presentation, since Milnor invariants $\mu(I)$ with $r(I)\le k$ are invariants of $w^{(k+1)}$-concordance. 
This can be seen as a direct corollary of \cite[Cor.~2.5]{Colombari}, which tells us that any $w$-tree presentation can be deformed into a sorted one by a sequence of welded concordance and surgeries along $w_{nk+1}$-trees. The desired result then follows by Remark \ref{rem:repeat}. 

\subsection{Artin-like isomorphisms}\label{sec:action}

The stacking product endows the set $\wSL(n)$ of $n$-component welded string links with a structure of monoid, with unit the trivial diagram $\1$. 
As seen in subsection \ref{sec:kred}, a conjugating 
automorphism $\varphi^{(k)}_L\in \Aut_c(\nR_kF_n)$ is associated to any welded string link $L$, 
which sends each generator $\alpha_i$ of $R_kF_n$ to its conjugate by the $k$-reduced longitude $\lambda^k_i(L)$. 
This yields, for each $k\ge 1$, a monoid homomorphism 
\[
  \varphi^{(k)}: \wSL(n)\longrightarrow \Aut_c(\nR_k F_n).
  \]
\begin{prop}\label{prop:iso}
For each $k\ge 1$, the map $\varphi^{(k)}$ descends to a group isomorphism 
\[
  \fract{\wSL(n)}{\textrm{self
    $w_k$-concordance}}\stackrel{\simeq}{\longrightarrow} \Aut_c(\nR_k F_n).
\]
\end{prop}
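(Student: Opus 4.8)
The plan is to establish Proposition \ref{prop:iso} by combining the main characterization results already proved with a surjectivity argument via the arrow calculus. First I would verify that $\varphi^{(k)}$ is well-defined as a group homomorphism on the quotient. The monoid structure on $\wSL(n)$ is the stacking product, and one checks that the $k$--reduced free action is compatible with stacking: stacking $L$ below $L'$ composes the longitude conjugations, so $\varphi^{(k)}_{L\cdot L'}=\varphi^{(k)}_{L'}\circ\varphi^{(k)}_{L}$ (up to the standard ordering convention). Thus $\varphi^{(k)}$ is a monoid homomorphism into the group $\Aut_c(\nR_k F_n)$. That $\varphi^{(k)}$ descends to the quotient by self $w_k$-concordance is precisely the content of the equivalence (i)$\Leftrightarrow$(iii) in Theorem \ref{th:23} together with (i)$\Leftrightarrow$(iii) in Theorem \ref{thm:12}: two welded string links are self $w_k$-concordant if and only if they induce the same $k$--reduced free action. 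Since the target is a group, the image monoid is a group, and the descended map is automatically \emph{injective}.

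The remaining task is \textbf{surjectivity}, which I expect to be the main obstacle. Given an arbitrary conjugating automorphism $\psi\in\Aut_c(\nR_k F_n)$, I must produce a welded string link $L$ with $\varphi^{(k)}_L=\psi$. The automorphism $\psi$ sends each $\alpha_i$ to $\alpha_i^{u_i}$ for some conjugating elements $u_i\in \nR_k F_n$. The strategy is to realize each $u_i$ diagrammatically: lift $u_i$ to a word in $F_n$, express it as a product of iterated commutators with entries conjugates of $\{\alpha_j,\ov\alpha_j\}_j$ (using the commutator identities of Section \ref{sec:LCS}), and then use the correspondence of Section \ref{sec:formalism} between such commutator words and conjugated $w$-trees. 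Concretely, by the remark following Example \ref{ex:conjugate}, every length $k$ commutator whose entries are conjugates of $\{x_j,\ov x_j\}_j$ arises as $w^U(T)$ for some conjugated $w_k$-tree $T$; products of such commutators correspond to adjacent conjugated trees. I would assemble, for each $i$, a union $P_i$ of conjugated $w$-trees with heads on the $i$th component whose total word equals $u_i$, and set $L=\1_P$ with $P=\bigcup_i P_i$.

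The verification that this $L$ works uses Section \ref{sec:weldedgroup}: for a $w$-tree presentation, the $i$th preferred longitude is $\lambda_i(L)=\alpha_i^{s_i}\prod_{T\in P_i} w(T)$, so by construction $\lambda^k_i(L)\equiv u_i$ in $\nR_k F_n$ up to a power of $\alpha_i$; since $\psi$ is conjugation by $u_i$ and conjugating by a power of $\alpha_i$ has no effect on $\alpha_i$ itself (as $\alpha_i$ commutes with its own powers), we get $\varphi^{(k)}_L=\psi$. The main delicacy here is bookkeeping: I must ensure that the $w$-trees can be chosen of degree $\ge $ the relevant bound and that the indeterminacy in lifting $u_i$ from $\nR_k F_n$ to $F_n$ does not obstruct the construction — but any two lifts differ by an element of $J^k=\Gamma_{k+1}N_1\cdots\Gamma_{k+1}N_n$, which dies in $\nR_k F_n$, so the resulting action is unaffected. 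A cleaner alternative, which I would mention, is to invoke the existing $k=1$ case from \cite{ABMW} as a template and observe that the arrow-calculus realization argument there generalizes verbatim once one has the commutator-realization statement from the remark after Example \ref{ex:conjugate}.
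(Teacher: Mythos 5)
Your proposal is correct and follows essentially the same route as the paper: injectivity is the combination of Theorems \ref{thm:12} and \ref{th:23}, surjectivity is the realization of an arbitrary $n$-tuple of conjugating elements as preferred longitudes via an arrow presentation, and the group structure on the quotient is deduced from that of $\Aut_c(\nR_k F_n)$. The only (harmless) difference is that the paper realizes the $u_i$ more directly as words of a \emph{sorted} presentation (paragraph after Definition \ref{def:ascending}), without needing to first rewrite them as products of commutators and pass through conjugated $w$-trees.
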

\begin{proof}
 Consider the quotient map $\fract{\wSL(n)}{\textrm{self $w_k$-concordance}}\longrightarrow \Aut_c(\nR_k F_n)$, which will shall still denote by $\varphi^{(k)}$. 
 The fact that this map is injective is a consequence of Theorems \ref{th:23} and \ref{thm:12}. 
 To prove surjectivity, it is sufficient to observe that an
   element of $\Aut_c(\nR_k F_n)$ is specified by an $n$-tuple of
   conjugating elements in $\nR_kF_n$, and that
   these conjugating elements are easily realized as the preferred
   longitudes of a sorted welded string link; see the paragraph
   following Def. \ref{def:ascending}. 
   Since $\Aut_c(\nR_k F_n)$ is a group, it follows that $\fract{\wSL(n)}{\textrm{self
    $w_k$-concordance}}$ is a group too\footnote{This fact was already
known, as any welded string link is invertible up to concordance
\cite[Prop. 6]{Gaudreau}.}, and that the isomorphism is a
group isomorphism.
\end{proof}

\begin{rem}
 Using the result of Colombari \cite{Colombari} mentioned in the
 introduction, the proof of Proposition \ref{prop:iso} can be adapted
 in a straightfoward way to show that, for all $k\ge 1$, we have an isomorphism 
 \[
   \fract{\wSL(n)}{\textrm{$w_k$-concordance}}\stackrel{\simeq}{\longrightarrow}
   \Aut_c\!\left(\fract{F_n}{\Gamma_{k+1} F_n}\right),
   \]
  where $\Aut_c\!\left(\fract{F_n}{\Gamma_{k+1} F_n}\right)$ denotes the group of conjugating automorphisms of $\fract{F_n}{\Gamma_{k+1} F_n}$. 
\end{rem}

We conclude this section by a (long) remark addressing the 4-dimensional counterpart of this study, that can be safely skipped by the reader who is not interested  in this matter.
\begin{rem}\label{rem:4D}
 Welded theory is intimately connected to the topology of \emph{ribbon knotted surfaces} in $4$--space, via the so-called \emph{Tube map} defined by Satoh in \cite{Satoh}. In our context, the Tube map is a surjective monoid homomorphism
 \[
   \textrm{Tube}: \wSL(n)\longrightarrow rT(n),
   \]
 where $rT(n)$ denotes the monoid of \emph{ribbon tubes} introduced in \cite{ABMW}. The question of the injectivity of this Tube map is however still
 open, but it is worth noting that our present work implies that any element in the kernel of the Tube map is self $w_k$-concordant to $\1$ for all $k$.
 
Indeed, there is a $4$--dimensional analogue of arrow calculus, which was developped (prior to \cite{arrow}) by Watanabe in \cite{Watanabe}. 
 There, a notion of $RC_k$-equivalence is defined for ribbon knotted surfaces, in terms of surgery along degree $k$ oriented claspers, which are embedded surfaces that realize topologically an oriented uni-trivalent tree with $2k$ vertices. A refined notion of \emph{self  $RC_k$-equivalence} is easily defined on $rT(n)$ by further requesting that such degree $k$ oriented claspers only intersect a single ribbon tube component. 
Combining furthermore this self $RC_k$-equivalence with the
topological concordance of ribbon tubes, one defines a notion of
\emph{self  $RC_k$-concordance} for these objects. 
It is  known \cite[Prop. 4.8]{BC} that the Tube map sends concordant welded
links to concordant ribbon tori, and it is easily verified that
it sends self $w_k$-concordant welded string links to  self
$RC_k$-concordant ribbon tubes. 
Hence, for each $k\ge 1$, the Tube map induces a surjective homomorphism
\[
 \textrm{Tube}_k: \fract{\wSL(n)}{\textrm{self
     $w_k$-concordance}}\longrightarrow \fract{rT(n)}{\textrm{self
     $RC_k$-concordance}}.
\]
These maps are actually isomorphisms, and the proof goes as
follows. Following \cite[Sec. 2.2.4]{ABMW}, which deals with the
  $k=1$ case, one can use Stallings theorem to associate an action
  $\varphi^{(k)}_T\in\Aut_c(\nR_kF_n)$ for any ribbon tube $T$,
  which actually conjugates, in $\nR_k\pi_1(B^4\setminus T)\cong\nR_kF_n$,
  the $i$th meridians of $T$ by its $i$th preferred longitude. By
  another use of Stallings theorem in dimension one more, this action
  is invariant under concordance, and it can be directly seen that
  $RC_k$-equivalent ribbon tubes have $k$-reduced $i$th longitudes which are
  congruent modulo $J_i^k$ (borrowing notation from Thm. \ref{thm:12}). As
  the Tube map is known to preserve the associated groups and longitudes, this
  leads to a map
  \[
    \varphi^{(k)}_r: \fract{rT(n)}{\textrm{self $RC_k$-concordance}}\longrightarrow\Aut_c(\nR_kF_n),
    \]
which satisfies $\varphi^{(k)}= \varphi^{(k)}_r\circ\textrm{Tube}_k$. It follows then from the injectivity of $\varphi^{(k)}$ that
$\textrm{Tube}_k$ is injective, hence an isomorphism. This implies that an element in
the kernel of the Tube map is trivial up to self $w_k$-concordance.
 \end{rem}

\section{The link case}\label{sec:links}

By Theorems \ref{thm:12} and \ref{th:23}, welded string links are classified up to self $w_k$--equivalence by their $k$--reduced longitudes. 
Similar phenomena occur in the classifications up to self-virtualization \cite{ABMW} and $w_k$--concordance \cite{Colombari}, and these results were extended to the case of welded links in \cite{AM} and \cite{Colombari} respectively, in terms of (adaptations of) the peripheral system. 
In this final section, we outline how a similar extension can be derived for links up to self $w_k$--concordance. 
\medskip

\emph{Welded links} are defined in the very same way as welded string links, by replacing the copies of the unit interval with copies of the circle $S^1$. 

Let $L$ be a welded link. 
Using the same Wirtinger-like procedure as in Section \ref{sec:milnor}, a group $G(L)$ can be associated to any welded link $L$,  which agrees with the fundamental group of the complement if $L$ is a classical link. 
A choice of one generic basepoint on each component determines a set
of meridians which normally generates $G(L)$. Up to Detour moves, these basepoints also allow to cut open $L$ into a welded string links $S_L$, and $G(L)$ is isomorphic to the quotient of $G(S_L)$ which identifies, for each strand, the meridians associated with its two endpoints. An \emph{$i$th longitude} for $L$ can then be defined
as the image of the $i$th longitude for $S_L$ in this
quotient. Of course, the result depends on the choice of the basepoints: moving the $i$th basepoint actually results in conjugating simultaneously
the associated $i$th meridian and longitude.\footnote{Note that, since all the meridians defined in this way are conjugated, all these normally generating sets of meridians lead to the same notion of $k$--reduced quotient for $G(L)$.}
 
 \begin{defi}\label{def:periph}
   Let $k$ be a positive integer. 
   The \emph{$k$--reduced peripheral system} of $L$ is defined as 
   \[
     \big(\nR_kG(L),\{x_i\}_i,\{\lambda^k_i\Gamma_kN_i\}_i\big)
   \]
where, for a given choice of basepoints, $(x_i,\lambda^k_i)$ are the images in $\nR_kG(L)$ of the associated meridians and preferred $i$th longitudes, and $\lambda^k_i.\Gamma_k N_i$ is the image in $\nR_kG(L)$ of the  coset of $\lambda^k_i$ modulo $\Gamma_k N_i$, with $N_i$ the normal subgroup of $\nR_kG(L)$ generated by $x_i$.  
It is well-defined up to, for each $i$, simultaneous conjugation of $x_i$ and $\lambda^k_i$ by an element of $\nR_kG(L)$.
 \end{defi}
\begin{rem}
The $1$--reduced peripheral system coincides with the \emph{reduced peripheral system} introduced by Milnor in his foundational paper \cite{Milnor2} on links up to link-homotopy, see also \cite{AM}. 
\end{rem}

The arrow calculus reviewed in Section \ref{sec:arrow} applies in the exact same way to welded links, and leads to the notions of \emph{self $w_k$-equivalence} and \emph{self $w_k$-concordance} for these objects, as in  Definitions \ref{def:selfwk} and \ref{def:selfwkconc}. Theorem \ref{th:nonobvious} also  holds for welded links, since the proof is purely local. 

The main result of this section is the following. 
\begin{theo} \label{thm:links}
  Two (welded) links have equivalent $k$--reduced peripheral systems if and only if they are self $w_k$--concordant. 
\end{theo}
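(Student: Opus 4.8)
The plan is to reduce the link statement to the string link case (Theorems \ref{thm:12} and \ref{th:23}) via the cutting-open operation $L\mapsto S_L$, keeping careful track of the basepoint indeterminacy that distinguishes the peripheral system from the plain longitude data. First I would establish the ``only if'' direction. Suppose $L$ and $L'$ are self $w_k$-concordant. Since the self $w_k$-concordance relation is generated purely by local moves (welded concordance together with surgery along self $w$-trees), each such move can be performed away from the chosen basepoints, hence lifts to a self $w_k$-concordance of the cut-open string links $S_L$ and $S_{L'}$. By Theorem \ref{th:23}, $S_L$ and $S_{L'}$ then have the same Milnor invariants with $r(I)\le k$, which by Theorem \ref{thm:12} means their $k$-reduced $i$th longitudes agree modulo $J_i^k=\Gamma_{k+1}N_1\cdots\Gamma_kN_i\cdots\Gamma_{k+1}N_n$. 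Passing to the quotient defining $G(L)$ from $G(S_L)$, and recalling that $\Gamma_kN_i$ is exactly the indeterminacy built into the coset $\lambda^k_i.\Gamma_kN_i$ of Definition \ref{def:periph}, this yields equivalent $k$-reduced peripheral systems, up to the simultaneous conjugation of each $(x_i,\lambda^k_i)$ coming from the freedom in the basepoints.

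For the ``if'' direction, suppose $L$ and $L'$ have equivalent $k$-reduced peripheral systems. After choosing basepoints realizing the equivalence, I would first absorb the simultaneous-conjugation ambiguity: conjugating the $i$th meridian and longitude by an element of $\nR_kG(L)$ corresponds geometrically to sliding the $i$th basepoint, which is a composition of Detour moves and is realizable without changing the self $w_k$-concordance class. Having normalized the basepoints, the equivalence of peripheral systems says precisely that the $k$-reduced longitudes $\lambda^k_i(S_L)$ and $\lambda^k_i(S_{L'})$ of the cut-open string links are congruent modulo $\Gamma_kN_i$ (the coset data), and hence, after also matching the $\Gamma_{k+1}N_j$ contributions for $j\ne i$ that are invisible after closing up, congruent modulo $J_i^k$. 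By Theorem \ref{thm:12} the string links $S_L$ and $S_{L'}$ have the same Milnor invariants with $r(I)\le k$, so by Theorem \ref{th:23} they are self $w_k$-concordant. It remains to promote this self $w_k$-concordance of string links to one of the closed links, i.e.\ to show the closure operation is compatible with the relation; this follows because closing up a strand is a capping that can be performed after the string-link moves.

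The main obstacle, and the step I would treat most carefully, is the passage from the closed-up congruence (mod $\Gamma_kN_i$, which is all the peripheral system remembers for each $i$) back to the full string-link congruence mod $J_i^k$. When one cuts $L$ open into $S_L$, the two endpoint meridians of each strand become distinct generators, and the closing-up identification can introduce or destroy the $\Gamma_{k+1}N_j$ factors ($j\ne i$) that are genuinely present in the string-link longitude but collapse in $G(L)$. I expect the resolution to parallel Colombari's extension to the link case \cite{Colombari} and the self-virtualization treatment in \cite{AM}: one argues that the basepoint-moving (conjugation) freedom, together with the self $w_k$-concordance already available, suffices to realize exactly those extra commutator factors as surgeries along self $w_k$-trees. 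This is where I would invoke the ``straightforward adaptations of Colombari'' promised in the introduction, rather than rederive the bookkeeping from scratch; the conceptual content is that the peripheral system records the same information as the string-link longitudes \emph{once the conjugation ambiguity is accounted for}, and Theorems \ref{thm:12} and \ref{th:23} supply the rest.
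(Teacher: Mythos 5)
Your invariance direction (a self $w_k$-concordance of links preserves the $k$-reduced peripheral system, via cutting open and quoting Theorems \ref{th:23} and \ref{thm:12}) is essentially the paper's argument and is fine. The converse is where the gap lies. Your plan is: normalize basepoints, deduce that the cut-open string links $S_L$ and $S_{L'}$ have $k$-reduced longitudes congruent modulo $J_i^k$, apply the string-link classification to get a self $w_k$-concordance of $S_L$ and $S_{L'}$, then close up. But equivalence of peripheral systems is a statement in $\nR_kG(L)$, which is a \emph{proper} quotient of $\nR_kG(S_L)\cong\nR_kF_n$: by Proposition \ref{prop:CMpresentation} the extra relators are precisely the commutators $[x_j,\lambda^k_j]$. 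Consequently the longitudes of $S_L$ and $S_{L'}$, viewed in $\nR_kF_n$, may differ by a product of such commutators; these do not lie in $J_i^k$, and the basepoint-moving freedom cannot remove them, since it only conjugates each pair $(x_i,\lambda^k_i)$ simultaneously. So $S_L$ and $S_{L'}$ need not be self $w_k$-concordant as string links, and the reduction to Theorems \ref{thm:12} and \ref{th:23} as a black box breaks at exactly this point.

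You do flag this as the main obstacle, but you misidentify both the lost information and the mechanism that restores it. The information lost on closing up is not ``$\Gamma_{k+1}N_j$ contributions'' (those are already killed in $\nR_k$ and in $J_i^k$) but the relators $[x_j,\lambda^k_j]$; and the paper realizes the insertion/deletion of such a commutator in a longitude \emph{not} by surgery along self $w_k$-trees, but by a genuine welded concordance (birth/death and saddle) trick on sorted link presentations, taken from the proof of Proposition 3.7 in \cite{Colombari}. The paper even stresses afterwards that concordance is needed precisely and only for this fourth operation --- which is why the unlink statement in the introduction can be upgraded to self $w_k$-\emph{equivalence}. The correct route therefore does not factor through the string-link theorem: one first makes both links sorted up to self $w_k$-equivalence (Proposition \ref{prop:4.4}), reruns the argument of (i)$\Rightarrow$(ii) of Theorem \ref{th:23} directly on the closed objects, and adds the fourth, concordance-realized operation.
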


The rest of this section is devoted to the proof of Theorem \ref{thm:links}. 
We stress that all ingredients of the proof are mostly corollaries of their string link counterparts, and straightforward adaptations of techniques of \cite{AM,Colombari}. Hence we will only sketch the proof of Theorem \ref{thm:links} below, only hinting to the main arguments and outlining the specificities of the link case.
\medskip

The fact that the $k$--reduced peripheral system is invariant under self $w_k$--concordance is an easy consequence of the string link case addressed in the previous sections. 
Indeed, a notion of $k$--reduced peripheral system can be defined for
welded string links as in Definition \ref{def:periph}, except that, in this case,
there is a natural choice of meridians, and it follows from Theorems \ref{th:23} and \ref{thm:12} that this is invariant under self $w_k$--concordance. By fixing a set of basepoints on a welded link $L$, and considering its $k$--reduced peripheral system as a quotient of the $k$--reduced peripheral system of the associated welded string link $S_L$, we obtain the desired invariance property.

The proof of \quote{only if} part of Theorem \ref{thm:links} goes roughly along the same lines as the proof of (i)$\Rightarrow$(ii) of Theorem \ref{th:23}, given in Section \ref{sec:proof13}. 
The analogue of Definition \ref{def:ascending} in the link case is the following, see \cite{AM}:  
a $w$--tree presentation for a welded link is \emph{sorted} if, on
each component, all the heads are adjacent. (A $w$--tree presentation for a welded link is a union of $w$-trees for the trivial diagram of the unlink.)

Since the $w_{kn+1}$--equivalence implies the self $w_k$--equivalence by Remark \ref{rem:repeat} and Theorem \ref{th:nonobvious}, we have the following as a direct corollary of \cite[Prop. 3.5]{Colombari}.
\begin{prop}\label{prop:4.4}
  Any welded link is self $w_k$--equivalent to a welded link admitting a sorted presentation.
\end{prop}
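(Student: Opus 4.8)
The plan is to obtain this as an immediate consequence of Colombari's sorting result, fed through the coincidence of equivalence relations established earlier in the paper. The strategy rests on three ingredients chained together: a sorting statement phrased purely in terms of high-degree $w$-tree surgery (Colombari), the passage from $w_{kn+1}$-equivalence to $w^{(k+1)}$-equivalence (Remark \ref{rem:repeat}), and the identification of $w^{(k+1)}$-equivalence with self $w_k$-equivalence (Theorem \ref{th:nonobvious}, which the excerpt notes holds verbatim for welded links since its proof is purely local). Chaining these converts a degree-counting sorting procedure into a genuine self $w_k$-equivalence.

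First I would invoke the link analogue \cite[Prop.~3.5]{Colombari} of the string-link sorting result \cite[Cor.~2.5]{Colombari}: any welded link $L$ can be carried to a welded link $L'$ admitting a sorted presentation --- here in the link sense recalled in this section, namely that the heads on each component are adjacent --- by a sequence of welded equivalence and surgeries along $w_{kn+1}$-trees. Since $L$ and $L'$ are then $w_{kn+1}$-equivalent, I would apply Remark \ref{rem:repeat}: a $w_{kn+1}$-tree on an $n$-component diagram necessarily has at least $k+1$ endpoints on some single component, so $w_{kn+1}$-equivalence implies $w^{(k+1)}$-equivalence. Finally, Theorem \ref{th:nonobvious} identifies $w^{(k+1)}$-equivalence with self $w_k$-equivalence, yielding that $L$ is self $w_k$-equivalent to the sorted diagram $L'$, as desired.

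Since this is a direct corollary, there is no deep obstacle; the substantive work is entirely carried by \cite{Colombari}. The points to verify with care are rather compatibility checks: that Colombari's sorting is phrased as a $w_{kn+1}$-equivalence and does \emph{not} require concordance moves (so that we land in self $w_k$-equivalence, and not merely self $w_k$-concordance as in the string-link argument of Section \ref{sec:proof13}); that it targets the weaker link notion of sorted presentation, namely heads clustered on each component, rather than the string-link ordering of Definition \ref{def:ascending}; and that the local surgery arguments underlying Remark \ref{rem:repeat} and Theorem \ref{th:nonobvious} are insensitive to whether the components are intervals or circles. Once these three compatibilities are confirmed, the proposition follows in a single line.
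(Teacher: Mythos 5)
Your proof is correct and follows exactly the paper's argument: the paper likewise derives the proposition as a direct corollary of \cite[Prop.~3.5]{Colombari}, using Remark \ref{rem:repeat} and Theorem \ref{th:nonobvious} to pass from $w_{kn+1}$-equivalence to self $w_k$-equivalence. The compatibility checks you list (no concordance moves in Colombari's link-sorting statement, the link-specific notion of sorted presentation, and the locality of the arguments) are precisely the points the paper relies on implicitly.
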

From this proposition, and the invariance of the $k$--reduced group under self $w_k$--equivalence, the exact same argument as
in \cite[Lem. ~1.18]{AM} proves the following. 
\begin{prop}\label{prop:CMpresentation}
  For every welded link $L$, its $k$--reduced group admits
  the following presentation
  \[
\nR_kG(L)\cong\big\langle x_1,\ldots,x_n\ |\ 
\Gamma_{k+1} N_i\textrm{ and } 
[x_i,\lambda^k_i]\textrm{ for all $i$} \big\rangle,
\]
where, for each $i$, $x_i$ is a meridian on the $i$th component, $N_i$ is the normal
subgroup generated by $x_i$, and $\lambda^k_i$ is a representative word
for the corresponding longitude.
\end{prop}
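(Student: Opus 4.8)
The plan is to exploit the invariance of both sides under self $w_k$--equivalence in order to reduce to a sorted presentation, where a presentation of the link group can be read off directly and then pushed to the $k$--reduced quotient. First I would record that the right-hand side of the asserted isomorphism depends only on the self $w_k$--equivalence class of $L$: by Theorems \ref{th:23} and \ref{thm:12} the longitude $\lambda^k_i$ is determined modulo $J_i^k$, and replacing $\lambda^k_i$ by $\lambda^k_i j$ with $j\in J_i^k$ changes the relator $[x_i,\lambda^k_i]$ only into a conjugate of itself modulo the relations $\Gamma_{k+1}N_l$. Indeed, the $\Gamma_{k+1}N_l$--factors of $j$ with $l\neq i$ are already killed, while for the $\Gamma_kN_i$--factor $j_i$ one has $[x_i,\lambda^k_i j_i]=[x_i,j_i]\,[x_i,\lambda^k_i]^{j_i}$ by (\ref{eq:com2}), with $[x_i,j_i]\in[\Gamma_1N_i,\Gamma_kN_i]\subset\Gamma_{k+1}N_i$ trivial in the quotient. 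Since the left-hand side is likewise invariant, Proposition \ref{prop:4.4} lets me assume that $L$ admits a sorted presentation $(\1,P)$.

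For such a presentation I would run the Wirtinger-type computation of Section \ref{sec:weldedgroup}, adapted to links as in \cite{AM}: the heads of $P$ split each component into arcs, each arc yields a generator, each $w$--tree $T$ contributes a conjugation relation $R_T$, and each component contributes in addition a \emph{closing relation} identifying the meridians at its two cut points. Because the presentation is sorted, the heads on the $i$th component are adjacent, so a single meridian $x_i$ survives there and the $i$th longitude reads $\lambda_i(L)=x_i^{s_i}\prod_{T\in P_i}w(T)$ for some $s_i\in\Z$. After eliminating the auxiliary arc generators through the relations $R_T$, the closing relation on the $i$th component becomes exactly $x_i=\overline{\lambda_i(L)}\,x_i\,\lambda_i(L)$, that is, $[x_i,\lambda_i(L)]=1$ (the framing factor $x_i^{s_i}$ commuting with $x_i$).

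It then remains to pass to the $k$--reduced quotient. Cutting $L$ open at the chosen basepoints produces a welded string link $S_L$ with $G(L)=G(S_L)/\langle\langle x_i=\overline{\lambda_i}\,x_i\,\lambda_i\rangle\rangle$, and by Lemma \ref{lem:izo} one has $\nR_kG(S_L)\cong\nR_kF_n=\langle x_1,\ldots,x_n\mid\Gamma_{k+1}N_i\rangle$; in particular the relations $R_T$, together with the $\Gamma_{k+1}N_i$, impose nothing further on $x_1,\ldots,x_n$. Taking the $k$--reduced quotient of $G(L)$ thus adjoins to $\nR_kF_n$ only the images $[x_i,\lambda^k_i]$ of the closing relations, which is precisely the claimed presentation.

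The step I expect to be the main obstacle is the second paragraph: one must verify carefully that, for a sorted presentation, eliminating the auxiliary meridians by the $R_T$ leaves no relation on the $x_i$ beyond the single closing commutator per component, and that the surviving longitude word is genuinely the preferred one. This is exactly where Lemma \ref{lem:izo} enters, guaranteeing that the tree relations $R_T$ are ``free'' in the $k$--reduced quotient; once this is in hand, the isomorphism follows as in \cite[Lem.~1.18]{AM}.
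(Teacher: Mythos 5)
Your proposal is correct and takes essentially the same route as the paper: the paper's proof likewise reduces to a sorted presentation via Proposition \ref{prop:4.4} and the invariance of the $k$--reduced group under self $w_k$--equivalence, and then invokes the Wirtinger-type elimination argument of \cite[Lem.~1.18]{AM}, which is exactly what your second and third paragraphs carry out. Your first paragraph (checking that changing $\lambda^k_i$ by an element of $J_i^k$ does not alter the presented group) is a detail the paper leaves implicit, and it is handled correctly.
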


The rest of the proof  then follows the exact same lines as \cite[Prop.~3.7]{Colombari}. 
We start with two welded links which have equivalent $k$--reduced peripheral systems, and consider sorted presentations using Proposition \ref{prop:4.4}.
The main difference with the string link case of Section \ref{sec:proof13} is that a fourth operation is involved on the $i$th preferred longitudes: by Proposition \ref{prop:CMpresentation}, these longitudes might differ by the insertion or deletion of a commutator $[x_j,\lambda^k_j]$ for some $j$. 
This extra operation can be achieved by a $w_k$-concordance on sorted  presentations, 
with the same trick as illustrated in the first figure of 
\cite[Proof~of~Prop.~3.7]{Colombari}. 
This concludes the proof of Theorem \ref{thm:links}. 
\medskip 

Notice that, in the above argument, the welded concordance is only
needed for the  fourth operation that inserts/deletes a commutator
$[x_j,\lambda^k_j]$ for some $j$. This is because such relators appear in the presentation of $\nR_kG(L)$ given in Proposition \ref{prop:4.4}. 
In the special case of a link $L$ with vanishing Milnor invariants  $\ov{\mu}_L(I)$ with $r(I)\le k$, we have that  $\nR_kG(L)\cong RF_n$, meaning that this extra operation involving welded concordance is not needed for the proof. 
This implies 
that a (welded) link has vanishing Milnor invariants $\ov{\mu}(I)$ with $r(I)\le k$, if and only if it is self $w_k$-equivalent to the unlink, as stated in the introduction. 
This is shown by applying verbatim the same argument as in \cite[\S
3.2]{Colombari}.\footnote{More precisely, the exact same arguments
  showing that \cite[Thm.~3.8]{Colombari} implies
  \cite[Cor.~3.11]{Colombari}, shows that Theorem \ref{thm:links}
  implies the theorem stated at the end of the introduction.}

\bibliographystyle{abbrv}
\bibliography{References}

\end{document}